\theoremstyle{plain}
\newtheorem*{thm*}{Theorem}
\newtheorem{thm}{Theorem}[section]
\newtheorem{cor}[thm]{Corollary}
\newtheorem{lem}[thm]{Lemma}
\newtheorem{prop}[thm]{Proposition}
\numberwithin{equation}{section}
\def\refn#1.#2{\expandafter\def\csname#1\endcsname{[#2]}}
\def\refnr#1.{\csname#1\endcsname}
\begin{document}

\baselineskip  1.25pc

\title[reducing subspace for multiplication operators of the Bergman space]
{Reducing subspaces for analytic multipliers of the Bergman space
}
\author{Ronald G. Douglas, Mihai Putinar and Kai Wang }
\address{  Department of Mathematics,
Texas A\&M University, College Station, TX 77843, USA}
\email{rdouglas@math.tamu.edu }
\address{Department of Mathematics,
University of California at Santa Barbara,
Santa Barbara, CA 93106, USA}
\email{mputinar@math.ucsb.edu }
\address{School of Mathematical Sciences,
Fudan University, Shanghai, 200433, P. R. China}
\email{kwang@fudan.edu.cn}

 \subjclass[2010]{ 47B35;  30D50; 46E20}
\keywords{reducing subspace, Bergman space,  finite Blaschke product}
\thanks{The second author was   supported by NSF (DMS 1001071) and the workshop in Analysis and Probability at Texas A\&M University.          The third author was  supported by
  NSFC (10731020,10801028),   the Department of Mathematics  at Texas A\&M University and Laboratory of Mathematics for Nonlinear Science at Fudan University.
}
\begin{abstract}
We answer affirmatively   the problem left open in \cite{DSZ,GSZZ} and prove that for a finite Blaschke product $\phi$, the
minimal reducing subspaces of the Bergman space multiplier $M_\phi$ are pairwise orthogonal  and their number is equal to the number $q$ of connected components of the Riemann surface of $\phi^{-1}\circ \phi$. In particular,
the double commutant $\{M_\phi,M_\phi^\ast\}'$ is abelian of dimension $q$. An analytic/arithmetic description of the minimal reducing subspaces of $M_\phi$ is also provided, along with a list of all possible cases in degree of $\phi$  equal to eight.
\end{abstract}
\maketitle

\section{Introduction}

The aim of the present note is to classify the reducing subspaces of analytic
Toeplitz operators with a rational, inner symbol acting on the Bergman
space of the unit disk. While a similar study in the case of the Hardy space was completed a long time
ago  (see \cite{Co,To1,To2}), investigation of the Bergman space
setting was started only a few years ago. Not
surprisingly, the structure and relative position of these reducing
subspaces in the Bergman space reveal a rich geometric (Riemann surface)
picture directly dependent on the rational symbol of the Toeplitz
operator.

We start by recalling a few basic facts and some terminology. The Bergman space $L^2_a(\mathbb{D})$ is the space of holomorphic functions on $\mathbb{D}$ which are square-integrable with respect to the Lebesgue measure $dm$ on $\mathbb{D}$.
For a bounded holomorphic  function $\phi$ on the unit disk, the multiplication operator, $M_\phi:L^2_a(\mathbb{D})\to L^2_a(\mathbb{D})$, is defined by
 $$M_\phi(h)=\phi h,\,\,h\in L^2_a(\mathbb{D}). $$
 The    Toeplitz operator $T_\phi$ on $L^2_a(\mathbb{D})$ with symbol $\phi\in L^\infty(\mathbb{D})$ acts as
         $$T_\phi(h)=P(\phi h), \,\,h\in  L_{a}^2,$$
         where $P$ is the orthogonal projection from  $L^2(\mathbb{D})$ to $L^2_a(\mathbb{D}).$  Note that $T_\phi=M_\phi$ whenever $\phi$ is holomorphic.

 An invariant subspace $\mathcal{M}$ for $M_\phi$ is a closed subspace of $L_a^2(\mathbb{D})$ satisfying $ \phi \mathcal{M}\subseteq \mathcal{M}$. If, in addition, $M_\phi^\ast \mathcal{M}\subseteq \mathcal{M}$, we call  $\mathcal{M}$  a reducing subspace of $M_\phi$. We say $\mathcal{M}$ is a minimal reducing subspace if there is no nontrivial reducing subspace for $M_\phi$ contained in $\mathcal{M}$. The study of invariant subspaces and reducing subspaces for various classes of linear operators  has inspired much deep research  and prompted many interesting  problems.   Even for the  multiplication operator  $M_z$, the lattice of invariant subspaces of $L^2_a(\mathbb{D})$ is huge and its order structure remains a mystery.  Progress in understanding the lattice of reducing subspaces of $M_\phi$ was only recently made, and only in the case of  inner function symbols \cite{DSZ,GH,GH2,GH3,GSZZ,SW,SZZ, Zhu}.

 Let $ \{\mathcal{M}_\phi \}'=\{X\in \mathscr{L}(L_a^2(\mathbb{D})) :\, M_\phi X=X M_\phi\}$ be the commutant algebra of $M_\phi$. The problem of classifying the reducing subspaces of $M_\phi$ is equivalent to finding the projections in $ \{\mathcal{M}_\phi \}'$. This classification problem in the case of the Hardy space was the motivation of the highly original works by Thomson and Cowen (see  \cite{Co,To1,To2}). They used the Riemann surface of $\phi^{-1}\circ \phi$ as a basis for the description of the commutant of $M_\phi$ acting on the Hardy space. Notable for our study is that inner function symbols played a dominant  role in their studies. In complete analogy, in the Bergman space $L_a^2(\mathbb{D})$ framework,  one can use essentially the same proof to show that for a "nice" analytic function  $f$, there exists a finite Blaschke product $\phi$ such that $\{M_f\}'=\{M_\phi\}'$. Therefore, the structure of the reducing subspaces of the multiplier $M_f$
 on the Bergman space of the disk is the same as that for $M_\phi$.

  Zhu  showed in \cite{Zhu} that for each Blaschke product of order $2$,  there exist exactly  $2$ different minimal reducing subspaces of $M_\phi$. This result also appeared in \cite{SW}. Zhu also  conjectured in \cite{Zhu} that  $M_\phi$ has exactly  $n$ distinct  minimal reducing subspaces for a Blaschke product $\phi$ of order $n$. The results in \cite{GSZZ} disproved Zhu's conjecture, and the authors raised  a modification   in which $M_\phi$ was conjecture to have at most $n$ distinct minimal reducing subspaces for  a Blaschke product $\phi$ of order $n$. Some partial results on this conjecture  were obtained in \cite{GH,GSZZ,SZZ}.  These
  authors proved the finiteness result in   case  $n\leq 6 $, each using a different method. A notable result for the general case    \cite{GSZZ} is that there always exists  a nontrivial minimal reducing subspace $\mathcal{M}$, named the "distinguish subspace", on which the action  of $M_\phi$ is unitarily  equivalent to the action  of $M_z$ on the Bergman space $L^2_a(\mathbb{D})$. Guo and Huang    also revealed in \cite{GH2} an interesting connection between
  the structure of the lattice of reducing subspaces of $M_\phi$  and an  isomorphism problem in  abstract  von Neumann algebras. The general case was recently studied by   the first author, Sun and Zheng \cite{DSZ} using a systematic analysis of the local inverses of the ramified finite fibration $\phi^{-1}\circ \phi$ over the disk. They proved that the linear dimension of the commutant $\mathcal{A}_\phi=\{\mathcal{M}_\phi, \mathcal{M}_\phi^\ast\}'$ is finite and equal to the number of connected components of the Riemann surface of $\phi^{-1}\circ \phi$. As a consequence, one finds that the number of pairwise orthogonal reducing subspaces of $M_\phi$ is finite. In \cite{DSZ} the  authors raised the following question, whose validity they have established in degree $n \leq 8$.\\

 { \bf{Conjecture.}} For a Blaschke product $\phi$ of  finite order,  the double commutant algebra   $\mathcal{A}_\phi$  is  abelian.\\

Several notable corollaries would  follow  once  one proves the conjecture. For instance, the
commutativity of the algebra $A_\phi$ implies that, for every finite Blaschke product
$\phi$, the minimal reducing subspaces of $M_\phi$ are mutually orthogonal; in addition,
their number is equal to the number $q$ of connected components of the Riemann surface of $\phi^{-1}\circ \phi$.

 The main result of this paper  (contained in Section 2) offers an
affirmative answer to the
above conjecture.
\begin{thm}
Let $\phi$ be a finite Blaschke product of order $n$. Then the von Neumann algebra $\mathcal{A}_\phi=\{M_\phi,M_\phi^\ast\}'$ is commutative of dimension $q$, and hence $\mathcal{A}_\phi\cong\underbrace{\mathbb{C}\oplus\cdots\oplus \mathbb{C}}_q,$ where $q$ is the number   of connected components of the Riemann surface of $\phi^{-1}\circ \phi$.
\end{thm}
The key observation for the proof is that there is an invertible  holomorphic function $u$    such that $\phi=u^n$ on  $\Omega$, where $\Omega$ is a domain   in $\mathbb{D}$ including an annulus of all points  sufficiently  close  to the boundary  $\mathbb{T}$. This implies that local inverses for $\phi^{-1}\circ \phi$ commute  under composition on $\Omega$.

It also allows us to provide an indirect description of  the reducing subspaces.  For convenience, we introduce some additional notations. Following \cite{DSZ}, there is a partition    $\{G_1,\cdots,G_q\}$    of the local inverses for $\phi^{-1}\circ \phi$. We now define a dual partition as follows.  For two integers $0\leq j_1,j_2\leq n-1,$  write
  $j_1\sim j_2 $ if
  \begin{equation}\label{dual}\sum_{\rho_k\in G_i}\zeta^{k\,j_1\,}=\sum_{\rho_k\in G_i}\zeta^{k\,j_2 }  \text{ for any  } 1\leq i\leq q.\end{equation}
Observing that $\sim$ is an equivalence relation, we partition the set $\{0,1,\cdots,n-1\}$ into equivalence classes
  $\{G'_1,\cdots,G'_p\}.$ Some information on the  Riemann surface of $\phi^{-1}\circ \phi$ is given by  the  following  corollary in Section 3.
   \begin{cor}
 The number of  components in  the dual partition is also equal to $q$, the number of connected components of the Riemann surface for $\phi^{-1}\circ \phi$.
 \end{cor}
  Furthermore, we obtain the following characterization for the minimal reducing subspace of   automorphic type  in Section 3. Here ${\mathcal O}(\mathbb{D})$ denotes the space of holomorphic functions on $  \mathbb{D} $.
    \begin{thm}
 Let $\phi$ be a finite Blaschke product  and $\{G'_1,\cdots,G'_q\}$ be the dual partition for $\phi$. Then the multiplication operator $M_\phi$   has exactly $q$ nontrivial minimal  reducing subspaces $\{\mathcal{M}_1,\cdots,\mathcal{M}_q\},$ and for any $1\leq j\leq q$
 $$ \mathcal{M}_j=\{f\in {\mathcal O}(\mathbb{D}): f|_\Omega \in  \mathcal{L}_j^\Omega\},$$
 where $\mathcal{L}^\Omega_j$ is a subspace of $L^2(\Omega) $ with the orthogonal basis  $ \{u^i u':   i+1(mod\,n) \in G'_j \}.$
 \end{thm}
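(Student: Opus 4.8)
The plan is to first reduce the counting statement to the structure of $\mathcal{A}_\phi$ from Theorem 1.1, and then to make the abstract minimal projections explicit through the local model $\phi=u^n$ on $\Omega$. Since $\mathcal{A}_\phi=\{M_\phi,M_\phi^\ast\}'$ and reducing subspaces of $M_\phi$ correspond bijectively to projections in $\mathcal{A}_\phi$, Theorem 1.1 already gives that $\mathcal{A}_\phi\cong\mathbb{C}^q$ has exactly $q$ minimal projections $P_1,\dots,P_q$, mutually orthogonal with $\sum_j P_j=I$; their ranges are then exactly the $q$ nontrivial minimal reducing subspaces $\mathcal{M}_j=\operatorname{ran}P_j$. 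It remains to identify each $\mathcal{M}_j$ with the space described in terms of the dual partition.

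First I would set up the local picture on $\Omega$. Writing $\zeta=e^{2\pi i/n}$ and using $\phi=u^n$ with $u$ invertible (hence nonvanishing) and holomorphic on $\Omega$, the local inverses of $\phi^{-1}\circ\phi$ take the form $\rho_k=u^{-1}(\zeta^k u)$ for $k=0,\dots,n-1$, and they form a cyclic group under composition on $\Omega$, as already noted after Theorem 1.1. To each $\rho_k$ I associate the weighted composition operator $W_k f=(f\circ\rho_k)\,\rho_k'$, which is unitary on $L^2(\Omega)$ because $\rho_k$ is an automorphism of $\Omega$ (as $u(\Omega)$ is invariant under multiplication by $\zeta$); the rotational symmetry of $u(\Omega)$ moreover makes $\{u^i u'\}$ orthogonal in $L^2(\Omega)$ via the change of variables $w=u(z)$. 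Using $(u\circ\rho_k)'=\zeta^k u'$, a direct computation gives
\begin{equation*}
W_k(u^i u')=(u\circ\rho_k)^i\,(u'\circ\rho_k)\rho_k'=\zeta^{k(i+1)}\,u^i u',
\end{equation*}
so each $u^i u'$ is a simultaneous eigenvector of all the $W_k$, with eigenvalue depending only on the residue $i+1\ (\mathrm{mod}\,n)$.

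Next I would identify $\mathcal{A}_\phi$, via the local inverse analysis of \cite{DSZ}, with the algebra generated by the operators $T_i=\sum_{\rho_k\in G_i}W_k$, $1\le i\le q$, indexed by the partition $\{G_1,\dots,G_q\}$. By the eigenvalue computation, $T_i$ acts on $u^{m-1}u'$ by the scalar $\sum_{\rho_k\in G_i}\zeta^{km}$. Consequently two frequencies $m$ and $m'$ are carried into the same minimal reducing subspace precisely when they cannot be separated by any $T_i$, that is, when $\sum_{\rho_k\in G_i}\zeta^{km}=\sum_{\rho_k\in G_i}\zeta^{km'}$ for every $i$; but this is exactly the relation $m\sim m'$ defining the dual partition. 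Hence the common eigenspaces of $\{T_1,\dots,T_q\}$ on $\Omega$ are exactly the spaces $\mathcal{L}^\Omega_j=\overline{\operatorname{span}}\{u^i u':\ i+1\,(\mathrm{mod}\,n)\in G'_j\}$, and by Corollary 1.2 there are exactly $q$ of them, matching the $q$ minimal projections.

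Finally, I would conclude by matching the two pictures: the minimal projection $P_j$ restricts on $\Omega$ to the orthogonal projection onto $\mathcal{L}^\Omega_j$, so for $f\in L^2_a(\mathbb{D})$ one has $f\in\mathcal{M}_j\iff P_jf=f\iff f|_\Omega\in\mathcal{L}^\Omega_j$, using that a holomorphic function on $\mathbb{D}$ is determined by its restriction to the open set $\Omega$. This yields the stated description $\mathcal{M}_j=\{f\in\mathcal{O}(\mathbb{D}):f|_\Omega\in\mathcal{L}^\Omega_j\}$. The main obstacle is exactly this transfer between the global and local pictures: one must justify rigorously that $\mathcal{A}_\phi$ acting on $L^2_a(\mathbb{D})$ is faithfully modeled, near the boundary, by the diagonal algebra generated by the $T_i$ in the orthogonal eigenbasis $\{u^i u'\}$, so that no minimal projection mixes distinct frequency classes. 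This rests on the commuting local-inverse structure granted by $\phi=u^n$ on $\Omega$ together with the component analysis of \cite{DSZ}, and on the orthogonality and completeness of $\{u^i u'\}$ coming from the rotational symmetry of $u(\Omega)$.
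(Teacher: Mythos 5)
Your outline follows the paper's own strategy in spirit (diagonalize the $\mathcal{E}_k$'s in the eigenbasis $\{u^iu'\}$ on $\Omega$, so that the dual partition appears as the joint-eigenvalue classes), but the step you defer to the end --- ``the minimal projection $P_j$ restricts on $\Omega$ to the orthogonal projection onto $\mathcal{L}_j^\Omega$'' --- is exactly where the real work of the proof lies, and your sketch of it contains two concrete gaps. First, a circularity: you invoke Corollary 1.2 (that the dual partition has exactly $q$ classes) to match eigenspaces with minimal projections, but that corollary is Corollary 3.10 of the paper, a \emph{consequence} of this theorem's proof, not an input to it; a priori the dual partition has some number $p$ of classes and proving $p=q$ is part of the task. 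Second, the global-to-local transfer is delicate because the restriction map $i_\Omega:L_a^2(\mathbb{D})\to L^2(\Omega)$ is invertible only onto $L_{a,p}^2(\Omega)$, the closure of the polynomials, which is a \emph{proper} subspace of $L_a^2(\Omega)$ (e.g.\ $z^{-1}\in L_a^2(\Omega)$ lies outside it), and $i_\Omega$ is not unitary. Consequently (i) $i_\Omega P_{\mathcal{M}_j} i_\Omega^{-1}$ is a priori only an idempotent, and the paper must prove it is an orthogonal projection by establishing $\mathcal{E}^{\Omega}_{k^-}=\mathcal{E}^{\Omega\ast}_k$ and that $L_{a,p}^2(\Omega)$ reduces the $\mathcal{E}_k^\Omega$, so that the idempotent is also normal; and (ii) your spectral decomposition of the $T_i$'s lives on all of $L_a^2(\Omega)$, with frequencies running over all of $\mathbb{Z}$, whereas the reducing subspaces live inside $L_{a,p}^2(\Omega)$, and nothing in your argument rules out that some class $\mathcal{L}_k^\Omega$ is entirely orthogonal to $L_{a,p}^2(\Omega)$, in which case it would correspond to no reducing subspace and both the count and the description would fail.

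This last point is precisely what the paper's Lemma 3.8 supplies: for every $k\geq 0$ there is an $i\geq 0$ with $\langle z^i, u^k u'\rangle_{L^2(\Omega)}\neq 0$. Its proof is not soft: if all these pairings vanished, then by passing to the boundary (Lemmas 3.6 and 3.7) one would get $u^{k+1}\in\overline{H^2(\mathbb{T})}$, hence $\phi^{k+1}=u^{n(k+1)}$ both holomorphic and antiholomorphic, hence constant --- a contradiction. This is a Hardy-space boundary argument, and it does not follow from the ``rotational symmetry and completeness of $\{u^iu'\}$'' you cite; those facts give the orthogonal decomposition of $L_a^2(\Omega)$ but say nothing about how it meets the subspace of restrictions of global Bergman functions. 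Combined with the one-class-per-subspace argument (the paper's Proposition 3.5, which plays the two expansions of $\mathcal{E}_k^\Omega$ against each other and uses invertibility of the coefficient matrix $[c_{kj}]$), this yields $p=q$ and the identification $\mathcal{M}_j^\Omega=\mathcal{L}_j^\Omega\cap L_{a,p}^2(\Omega)$. Finally, note that your last equivalence also silently requires the second half of the paper's Lemma 3.2 --- that a holomorphic $f$ on $\mathbb{D}$ with $f|_\Omega\in L_a^2(\Omega)$ automatically lies in $L_a^2(\mathbb{D})$ --- in order to replace the quantifier $f\in L_a^2(\mathbb{D})$ by $f\in\mathcal{O}(\mathbb{D})$ in the statement.
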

 \noindent Note   the $\mathcal{M}_{n-1}$ coincides with the distinguish reducing subspace for $M_\phi$ shown to exist  in \cite{GSZZ}.
This latter theorem provides a possible way to calculate the reducing subspace if one knows the partition of the family of local inverses. The above corollary hints that
the possible       partitions  are very restricted.

 Finally, in Section 4 we list some algebraic conditions for  the partitions, which offer an arithmetic path towards the classification
of   finite Blaschke products. The idea is displayed by the classification  for the Blaschke products of order $8$. In a similar way one can also explain   the classifications of the Blaschke products of order $3$ or $4$ in \cite{GSZZ,SZZ},  which  have been established    by identifying the Bergman space of the disk with the restriction of the Hardy space of the bidisk to the diagonal. We point out that these results and examples provide some very detailed information about the branch covering space defined by a finite Blaschke product.

  \section{The double commutant algebra is  abelian}
%
%
  The notation below is borrowed from \cite{DSZ}.  Accordingly, throughout this article  $\phi$ is a finite Blaschke product having $n$ zeros taking  multiplicity into account. The finite set $E'= \phi^{-1}(\phi(\{\beta\in\mathbb{D}:\phi'(\beta)=0\})  ) $ denotes the branch points of $\phi$,  $E=\mathbb{D}\backslash E'$ is its complement  in $\mathbb{D}$ and let $\Gamma$ be a choice of curves passing through all  points of $ E'$ and a fixed point on the unit circle $\beta_0$ such that $\mathbb{D}\backslash\Gamma$ is a simply connected region contained in $E$. Indeed, to be precise, one can construct  $\Gamma$ as follows: order $E'$ as $\{\beta_1,\beta_2,\cdots,\beta_s\}$ such that $k\leq j$ iff $Re \beta_k \leq Re\beta_j$ or  $Re \beta_k = Re\beta_j$ and $Im \beta_k \leq  Im\beta_j$, and set $\beta_0=Re \beta_1+i\,\sqrt{1-(Re \beta_1)^2} $. Letting $\Gamma_k, \,\,0\leq k\leq s-1$ be the line segment between  $\beta_k$ and  $\beta_{k+1}$, we define \begin{equation}\label{Gamma1}\Gamma=\cup_{0\leq k\leq s-1} \Gamma_k .\end{equation}

    By an observation made in \cite{DSZ}, the family of analytic local inverses $\{\rho_0,\cdots,\rho_{n-1}\}$ for  $\phi^{-1}\circ \phi$ is well defined on $\mathbb{D}\backslash\Gamma$. That is, each $\rho_j$ is a holomorphic function  on $\mathbb{D}\backslash\Gamma$ which  satisfies $\phi(\,\rho_j(z))=\phi(z)$ for $z\in \mathbb{D}\backslash\Gamma$. We define the equivalence relation on the set of local inverse so that  $\rho_i \sim \rho_j$ if there exists an arc
  $\gamma$ in $E$ such that $\rho_i$ and $\rho_j$ are   analytic   continuations of each other  along $\gamma$. The resulting equivalence classes are denoted $\{G_{ 1},\cdots, G_{ q }\}$. For each $G_{ k}, 1\leq k\leq q $, define the   map $\mathcal{E}_k$:
  $$(\mathcal{E}_k f) (z)=\sum_{\rho\in G_{ k} }  f(\rho(z))\rho\,'(z),\,\,f\,\, holomorphic \,\,on \,\,\mathbb{D}\backslash\Gamma,\, z\in \mathbb{D}\backslash\Gamma.$$
 The  central result in  \cite{DSZ}  asserts that the operators    $\{\mathcal{E}_1,\cdots,\mathcal{E}_{q }\}$ can naturally  be extended to  bounded operators on the Bergman space $L^2_a(\mathbb{D})$ which are  linearly independent, and the double commutant algebra $\mathcal{A}_\phi$ is linearly generated by these operators; that is,
 $$ \mathcal{A}_\phi= \{M_\phi,M_\phi^\ast\}'={\text{span}} \{\mathcal{E}_1,\cdots,\mathcal{E}_{q }\}.$$ In this section we prove that the von Neumann algebra  $\mathcal{A}_\phi$ is commutative.

To accomplish this,  we extend    the given family of analytic local inverses on $\mathbb{D}\backslash \Gamma$  to a larger  region and  prove  that they commute under composition  near the boundary of $\mathbb{D}$. The key observation for the proof of the following lemma is that $ \sqrt[n]{(z-a_1)\cdots (z-a_n)}$ is a single-valued holomorphic function    on $\mathbb{C}\backslash L$, where $L$ is a curve drawn through the zero set $\{a_1,a_2,\cdots,a_n\}$. One can construct an $L$ and verify the above assertion  as follows. Notice that
 $ \sqrt[n]{ z+1 }$ is holomorphic outside any smooth simply curve  connecting $-1$ and $\infty$. By changing variables, we have for each $2\leq i\leq n$ that
 $$ \sqrt[n]{\frac{ z-a_i }{z-a_1}}=\sqrt[n]{\frac{ a_1-a_i }{z-a_1}+1}$$
 is holomorphic outside the line segment  connecting $a_1$ and $a_i$. Therefore,
 $$\sqrt[n]{(z-a_1)\cdots (z-a_n)}=(z-a_1) \sqrt[n]{\frac{ z-a_2 }{z-a_1}}\cdots \sqrt[n]{\frac{ z-a_n }{z-a_1}}$$
is holomorphic outside the arc which consists of the line segments connecting $a_1$ and $a_i$ for $2\leq i\leq n$. We refer the interested  reader  to \cite[Section 55]{Mar}   for a more careful argument.

Hereafter, let us set  $A_r=\{z\in\mathbb{C}: r<|z|<1\}$ for any $0<r<1$, and let $\zeta=e^{\frac{2  i  \pi}{n}}$ be a primitive $n$-th root of unity.

  \begin{lem}\label{u}
  For   a finite Blaschke product $\phi$ of order $n$, there exists   a holomorphic  function $u$ on a neighborhood of $\overline{\mathbb{D}}\backslash L$ such that $\phi=u^n$, where $L$ is an arc inside $\mathbb{D}$ containing the zero set of $\phi$. Moreover, there exists $0<r<1$ such that $\overline{A_r}$ is contained in the image of $u$ and   $u: u^{-1}(\overline{A_r}) \to \overline{A_r}$ is   invertible.
  \end{lem}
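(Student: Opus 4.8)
The plan is to split the argument into an algebraic construction of $u$ and a boundary analysis. For the first part, write $\phi(z)=c\prod_{j=1}^n\frac{z-a_j}{1-\overline{a_j}z}$ with $|c|=1$ and $a_j\in\mathbb{D}$, and treat numerator and denominator separately. The zeros $a_1,\dots,a_n$ lie in $\mathbb{D}$, so by the observation preceding the lemma (and \cite[Section 55]{Mar}) the function $\sqrt[n]{\prod_j(z-a_j)}$ is single-valued and holomorphic on $\mathbb{C}\setminus L$, where $L$ is an arc built from line segments joining the $a_j$; by convexity of $\mathbb{D}$ this $L$ can be taken inside $\mathbb{D}$. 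The poles $1/\overline{a_j}$ all satisfy $|1/\overline{a_j}|>1$, so $\prod_j(1-\overline{a_j}z)$ is zero-free on a disk $\{|z|<R\}$ with $R>1$, which is simply connected; hence it admits a holomorphic $n$-th root there. Fixing an $n$-th root of the unimodular constant $c$ and setting $u=c^{1/n}\sqrt[n]{\prod_j(z-a_j)}\big/\sqrt[n]{\prod_j(1-\overline{a_j}z)}$ gives a holomorphic function on a neighborhood of $\overline{\mathbb{D}}\setminus L$ with $u^n=\phi$.

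For the boundary statement, I would first record two facts about $\mathbb{T}$. Since $\phi$ maps $\mathbb{D}$ into $\mathbb{D}$ and $\mathbb{T}$ onto $\mathbb{T}$, on $\mathbb{T}$ we have $|u|=|\phi|^{1/n}=1$, so $u(\mathbb{T})\subseteq\mathbb{T}$, while strictly inside $|u|=|\phi|^{1/n}<1$. Next, a direct computation of the logarithmic derivative gives $z\phi'(z)/\phi(z)=\sum_j (1-|a_j|^2)/|z-a_j|^2>0$ for $z\in\mathbb{T}$, so $\phi'\neq0$ on $\mathbb{T}$; since $u\neq0$ there and $nu^{n-1}u'=\phi'$, also $u'\neq0$ on $\mathbb{T}$, hence on a two-sided neighborhood $V$ of $\mathbb{T}$, where $u$ is a local biholomorphism. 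The degree of $u|_{\mathbb{T}}:\mathbb{T}\to\mathbb{T}$ then equals $1$, because $u^n=\phi$ forces $n\cdot\deg(u|_{\mathbb{T}})=\deg(\phi|_{\mathbb{T}})=n$; being an orientation-preserving immersion of degree one, $u|_{\mathbb{T}}$ is a diffeomorphism, in particular injective.

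From here I would upgrade injectivity on $\mathbb{T}$ to injectivity on a thin collar. A standard compactness argument (if $u(z_k)=u(w_k)$ with $z_k\neq w_k$ and both approaching $\mathbb{T}$, the limits coincide on $\mathbb{T}$ and contradict local injectivity) shows $u$ is injective on some $V$, hence a biholomorphism onto the open set $u(V)\supseteq\mathbb{T}$. As $u(V)$ is open and contains $\mathbb{T}$, it contains $\overline{A_r}$ for every $r<1$ sufficiently close to $1$, which already yields $\overline{A_r}\subseteq\operatorname{im}(u)$ together with a bijection from $(u|_V)^{-1}(\overline{A_r})$ onto $\overline{A_r}$. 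It remains to check that the \emph{full} preimage $u^{-1}(\overline{A_r})$ coincides with this set, i.e.\ does not escape $V$. Since $|u|\le1$ exactly on $\overline{\mathbb{D}}$ and $|u|\ge r$ means $|\phi|\ge r^n$, we have $u^{-1}(\overline{A_r})=\{z\in\overline{\mathbb{D}}:|\phi(z)|\ge r^n\}$; because $|\phi|<1$ strictly on $\mathbb{D}$, another compactness argument shows that for $r$ close enough to $1$ this superlevel set is a thin collar clinging to $\mathbb{T}$, hence inside $V$ and disjoint from $L$. Consequently $u$ restricts to a homeomorphism $u^{-1}(\overline{A_r})\to\overline{A_r}$.

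I expect the main obstacle to be this last point: ensuring the preimage of the annulus does not leak into the interior of $\mathbb{D}$, where $u$ need not be injective. This is exactly what pins down the choice of $r$, namely so close to $1$ that the superlevel set $\{|\phi|\ge r^n\}$ is confined to the collar near $\mathbb{T}$ on which $u$ has already been shown to be biholomorphic. The global (as opposed to merely local) injectivity of $u$ on $\mathbb{T}$, obtained through the degree-one computation, is the other ingredient that makes the collar argument go through.
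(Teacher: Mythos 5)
Your proof is correct, but at both places where real work is needed it takes a genuinely different route from the paper, so a comparison is worthwhile. The construction of $u$ as a ratio of $n$-th roots is the same (you are in fact a bit more careful: you handle the unimodular constant and observe that the denominator needs no cut, being zero-free on a disk of radius $R>1$). For surjectivity of $u|_{\mathbb{T}}$ onto $\mathbb{T}$, the paper argues by contradiction through operator theory: if $u|_{\mathbb{T}}$ were null-homotopic, so would be $\phi=u^n$, and Coburn's theorem plus continuity of the Fredholm index of the Toeplitz operators along the homotopy would force $\mathrm{Ind}(M_\phi)=0$, contradicting $\mathrm{Ind}(M_\phi)=-n$. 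Your computation $n\cdot\deg(u|_{\mathbb{T}})=\deg(\phi|_{\mathbb{T}})=n$ is the same winding-number idea stripped of the operator theory, and it buys more: $\deg(u|_{\mathbb{T}})=1$, not merely $\neq 0$. For invertibility the two arguments are truly different. The paper never looks at $\phi'$ or local injectivity; having fixed $r$ with $\overline{A_r}\subseteq \mathrm{im}(u)$ by the open mapping theorem, it uses a pigeonhole count: for $w\in\overline{A_r}$ the $n$ sets $u^{-1}(\{\zeta^k w\})$, $0\le k\le n-1$, are pairwise disjoint, nonempty (the annulus is rotation invariant), and all contained in $\phi^{-1}(\{w^n\})$, which has at most $n$ points, so each is a singleton and $u$ is injective on $u^{-1}(\overline{A_r})$. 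You instead prove $\phi'\neq 0$ on $\mathbb{T}$ via the logarithmic derivative, upgrade the degree-one immersion $u|_{\mathbb{T}}$ to a diffeomorphism, propagate injectivity to a collar by compactness, and then confine $u^{-1}(\overline{A_r})$, identified with the superlevel set $\{z\in\overline{\mathbb{D}}:|\phi(z)|\ge r^n\}$, to that collar by taking $r$ close to $1$. Your route is longer but yields more geometric information (injectivity of $u$ on a full neighborhood of $\mathbb{T}$ and an explicit description of $u^{-1}(\overline{A_r})$ as a superlevel set of $|\phi|$, which pins down how $r$ must be chosen); the paper's counting argument is shorter and needs nothing beyond the open mapping theorem, and in particular never has to know that $\phi$ has no critical points on $\mathbb{T}$. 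One small point of hygiene: when you write $u^{-1}(\overline{A_r})=\{z\in\overline{\mathbb{D}}:|\phi(z)|\ge r^n\}$ you should note that $u$ is undefined on $L$, so this identity requires $r^n>\max_{L}|\phi|$ — but since $L$ is compact in $\mathbb{D}$ this is exactly what your final compactness step provides, so the gap is only in the ordering of the sentences, not in the mathematics.
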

  \begin{proof} Suppose $a_1,\cdots, a_n$ are the zeros of $\phi$ in  $\mathbb{D}$ (taking multiplicity into account). Choose an analytic branch for $w=\sqrt[n]{z}$. By \cite[Section 55, p221]{Mar}, $w=\sqrt[n]{(z-a_1)\cdots (z-a_n)}$ is a single-valued holomorphic function    on $\mathbb{C}\backslash L$, where $L$ is a curve drawn through the zero set.  If we set
  $$u(z)=\frac{\sqrt[n]{(z-a_1)\cdots (z-a_n)}}{\sqrt[n]{(1-\overline{a_1} z)\cdots (1-\overline{a_n} z)}},$$
  then $u(z)$ is holomorphic  on a neighborhood of  $\overline{\mathbb{D}}\backslash L$ and $u^n=\phi$.

 Additionally, one sees   that $|u|^n=|\phi|$ on $\overline{\mathbb{D}}\backslash L$ and hence   $u(\mathbb{T})\subseteq\mathbb{T}$.  We claim that $u(\mathbb{T})=\mathbb{T}$. Indeed, if $u(\mathbb{T})\neq \mathbb{T}$, then $u:\mathbb{T}\to \mathbb{T}$ is homotopic to a constant map on
   $\mathbb{T}.$ That is, there exists $u(\theta,t)\in C(\mathbb{T}\times [0,1], \mathbb{T})$ such that $u(\theta,0)=u(\theta)$ and  $u(\theta,1)=1$.  This implies that $\phi=u^n:\mathbb{T}\to \mathbb{T}$ is also homotopic to the constant map by the path $t\to u^n(\cdot,t)$. If we extend each $u(\cdot,t)$ to be a continuous function $\widetilde{u}(\cdot,t)$ on $\overline{\mathbb{D}}$, then by   \cite[Theorem 1]{Cob} each Toeplitz operator $T_{ \widetilde{u}^n(\cdot,t) }$ is Fredholm. Furthermore, using  \cite[Theorem 1]{Cob} one sees that $t\to \mathrm{Ind} (T_{ \widetilde{u}^n(\cdot,t) })$ is a continuous  map from $[0,1]$ to $\mathbb{Z}$. This implies that it is a constant map, which leads to a  contradiction since  $-n=\mathrm{Ind}( M_{\phi})=\mathrm{Ind} (T_{ \widetilde{u}^n(\cdot,0) })=\mathrm{Ind} (T_{ \widetilde{u}^n(\cdot,1) })=\mathrm{Ind}( M_{1})=0. $  Therefore, we have that      $u(\mathbb{T})=\mathbb{T}$.

   By the open mapping theorem, the image of $u$ is an open subset of $\mathbb{C}$ including $\mathbb{T}$. Therefore, there exists $0<r <1$ such that $\overline{ A_{r }}  \subseteq u(\overline {\mathbb{D}}\backslash L) $.  Now we only need to prove that the map $u: u^{-1}(\overline{{A_{r}}}) \to \overline{A_{r}}$ is injective. In fact, for any $w\in \overline A_{r}$, since $\phi(u^{-1}(\zeta^k  w))= w^n$ for $0\leq k\leq n-1 $, we have that
   $$ \bigcup_{0\leq k\leq n-1} u^{-1}(\{\zeta^k w\}) \subseteq \phi^{-1}(\{w^n\}).$$
    Remarking  that the set $\phi^{-1}(\{w^n\})$ includes at most $n$ points and each set $u^{-1}(\{\zeta^k w\})$ is
    nonempty, one sees that each   $u^{-1}(\{\zeta^k w\})$ is  a singleton. This means that  $u$ is one to one  on $u^{-1}(\overline{{A_{r}}})$.  Therefore, $u: u^{-1}( \overline{A_{r }})\to \overline{A_{r}} $ is invertible, completing the proof.
  \end{proof}

The above lemma allows us to  extend
local inverses   as follows. Hereafter,  we denote $\Omega=u^{-1}(A_r)$, where $A_r$ is the annuals appearing in Lemma 2.1.   On the connected domain $ \Omega$, define $\widetilde{\rho}_k(z)=u^{-1} (\zeta^k  u(z))$ for each $0\leq k\leq n-1$. Note that $\widetilde{\rho}_k$ is holomorphic and $\phi(\widetilde{\rho}_k(z))=\phi(z)$ for $z\in \Omega$. This means that $\{\widetilde{\rho}_k\}_k$ is also the family of local inverses on $\Omega$ for $\phi^{-1}\circ \phi$. It follows that $\rho_k=\widetilde{\rho}_{i_k}$ for some $i_k$ on $\Omega\bigcap [\mathbb{D}\backslash \Gamma]$. Matching  the maps $\widetilde{\rho}_{i_k}$ and $\rho_k$, respectively,  we obtain the family of  local inverses on a  larger  domain $\Omega\bigcup [\mathbb{D}\backslash \Gamma]$. Furthermore, we can prove the following lemma.

\begin{lem}\label{loc}
For a finite Blaschke product $\phi$, there exists a family of  local inverses   for $\phi^{-1}\circ \phi$ on the domain $\mathbb{D}\backslash \Gamma'$, where $\Gamma'=\cup_{1\leq k\leq s-1} \Gamma_i$ is a proper subset of $\Gamma$ appearing in $(\ref{Gamma1})$,   which just consists of  the set of line segments passing  through  all critical points $E'$ of $\phi$.
\end{lem}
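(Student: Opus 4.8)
The plan is to use Lemma~\ref{u} to enlarge the domain on which the local inverses live, eliminating all but one of the defining arcs of $\Gamma$. Recall that $\Gamma=\cup_{0\leq k\leq s-1}\Gamma_k$, where $\Gamma_0$ is the segment joining the boundary point $\beta_0$ to the first critical point $\beta_1$, and $\Gamma_1,\dots,\Gamma_{s-1}$ are the segments joining consecutive critical points. The set $\Gamma'=\cup_{1\leq k\leq s-1}\Gamma_k$ discards only $\Gamma_0$, the ``tail'' reaching out to the unit circle. So what must be shown is that the local inverses, originally defined only on $\mathbb{D}\backslash\Gamma$, extend holomorphically across the open arc $\Gamma_0$ minus its critical endpoint, i.e. that the branch cut $\Gamma_0$ is spurious.

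\emph{First} I would invoke the extension already described in the paragraph preceding the lemma. Lemma~\ref{u} furnishes the holomorphic $n$-th root $u$ with $\phi=u^n$ on a neighborhood of $\overline{\mathbb{D}}\backslash L$, together with the annular region $\Omega=u^{-1}(A_r)$ on which the maps $\widetilde\rho_k(z)=u^{-1}(\zeta^k u(z))$ are well-defined, holomorphic, single-valued local inverses. Because $\overline{A_r}$ is an annulus containing all points near $\mathbb{T}$, the domain $\Omega$ contains a full annular collar $\{z:\,r'<|z|<1\}$ adjacent to the boundary, and in particular it contains the outer portion of the tail $\Gamma_0$ near $\beta_0$. \emph{Second}, on the overlap $\Omega\cap[\mathbb{D}\backslash\Gamma]$ each original inverse $\rho_k$ must coincide with some $\widetilde\rho_{i_k}$, since both families are complete sets of local inverses for $\phi^{-1}\circ\phi$ on that connected overlap and local inverses are determined by their value at a single point. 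This identification, carried out in the text above, glues the two families into a single holomorphic family on $\Omega\cup[\mathbb{D}\backslash\Gamma]$.

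\emph{The point of the lemma} is that this glued domain already covers $\Gamma_0$ except possibly at the critical endpoint $\beta_1$. Indeed, $\mathbb{D}\backslash\Gamma$ together with the annular collar from $\Omega$ leaves uncovered only the segments $\Gamma_1,\dots,\Gamma_{s-1}$ interior to the disk, because $\Gamma_0$ runs from the boundary (where $\Omega$ takes over) inward to $\beta_1$. Thus $\Omega\cup[\mathbb{D}\backslash\Gamma]\supseteq\mathbb{D}\backslash\Gamma'$, and restricting the glued family to $\mathbb{D}\backslash\Gamma'$ gives the asserted family of local inverses. The remaining cuts $\Gamma'$ genuinely cannot be removed by this method, since they pass through the branch points $E'$ where the single-valued root $u$ itself ceases to exist (the cut $L$ in Lemma~\ref{u} must pass through the zeros of $\phi$, and analytic continuation around a branch point permutes the sheets).

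\emph{The main obstacle} I anticipate is verifying the compatibility of the two branch structures along the seam where $\Omega$ meets $\mathbb{D}\backslash\Gamma$, specifically checking that the matching index $i_k$ is locally constant so that the glued functions are genuinely single-valued and holomorphic across $\Gamma_0$. One must confirm that continuing an inverse $\rho_k$ from the region $\mathbb{D}\backslash\Gamma$ through $\Omega$ and back to $\mathbb{D}\backslash\Gamma$ on the other side of the tail $\Gamma_0$ returns the same branch; equivalently, that the monodromy of $\phi^{-1}\circ\phi$ around a loop encircling only $\beta_0$ (and no critical point) is trivial. This is exactly the statement that $\Gamma_0$ encloses no branch point, which holds by construction, so the argument should close, but the care is in making the continuation across the overlap rigorous rather than merely plausible.
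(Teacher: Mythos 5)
Your first two steps (the collar $A_{r'}\subseteq\Omega$ near $\mathbb{T}$, and the matching $\rho_k=\widetilde{\rho}_{i_k}$ on the connected overlap $A_{r'}\backslash\Gamma_0$) reproduce the opening paragraph of the paper's proof. But the claim on which your argument then pivots --- that $\Omega\cup[\mathbb{D}\backslash\Gamma]\supseteq\mathbb{D}\backslash\Gamma'$, i.e.\ that the glued domain already covers all of $\dot{\Gamma}_0$ except near $\beta_1$ --- is false. The set $\Omega=u^{-1}(A_r)$ is exactly the region where $r<|u|<1$, equivalently $r^n<|\phi|<1$; it is a collar of the boundary and cannot reach down the tail to $\beta_1$. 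Indeed, the injectivity of $u$ on $u^{-1}(\overline{A_r})$ from Lemma~\ref{u} forces $\overline{A_{r^n}}$ to contain no critical values of $\phi$ (since $\phi'=nu^{n-1}u'$ and $|u|\geq r>0$ there, a critical point of $\phi$ in that set would be a zero of $u'$, contradicting injectivity), while $\phi(\beta_1)$ \emph{is} a critical value because $\beta_1\in E'$; hence $|\phi(\beta_1)|<r^n$, and by continuity the points of $\dot{\Gamma}_0$ near $\beta_1$ satisfy $|\phi|<r^n$ and lie outside $\Omega$. So the collar gluing extends the $\rho_k$ only across the outer portion of the tail; the inner portion of $\dot{\Gamma}_0$ remains a genuine slit for your glued family, and your proof offers no mechanism to cross it.

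That missing extension is precisely where the paper does its real work: it introduces the set $S\subseteq\dot{\Gamma}_0$ of points across which continuation fails, takes the point $z_0$ corresponding to $s=\inf\{t:\,t\beta_0+(1-t)\beta_1\in S\}$ (which lies strictly inside $\mathbb{D}$ because $S\cap A_{r'}=\emptyset$), and uses that $z_0$ is a regular point of $\phi$: on a small disk $V$ around $z_0$ there are $n$ local branches, $V\backslash S$ is connected, so matching branches on $V\backslash S$ extends the family past $z_0$, contradicting the maximality of $S$ unless $S=\emptyset$. Your closing remark about monodromy contains the germ of a legitimate alternative repair --- since $\dot{\Gamma}_0$ carries no point of $E'$, continuation along a loop that crosses $\Gamma_0$ once inside the collar and once at an arbitrary point $p\in\dot{\Gamma}_0$ is continuation along a null-homotopic loop in $\mathbb{D}\backslash E'$, hence trivial, so the two sides of the slit match at every $p$ --- but as written you aim that check only at the seam inside the collar and at loops ``encircling only $\beta_0$'', treating it as a consistency check for a gluing you believe is already global, rather than as the argument needed to cross the part of $\Gamma_0$ that $\Omega$ never reaches.
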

\begin{proof}
It suffices to show that the family of  local inverses $\{\rho_0,\rho_1,\cdots,\rho_{n-1}\}$ can be analytically continued  across   the interior point set
$\dot{\Gamma}_0 =\{t\beta_0+(1-t)\beta_1: 0< t< 1\}$.

To start, we prove that analytic continuation  is possible   when the points in $\dot{\Gamma}_0$ are close enough to the boundary $\mathbb{T}$. By the continuity of $u$ and the construction of $\Gamma$, we can choose
  a    number $r'$  close to $1$ such that $u(A_{r'})\subset A_r$ and $A_{r'}\cap \Gamma'=\emptyset$.  For each $0\leq k\leq n-1$, let
  $\widetilde{\rho}_k(z)=u^{-1} (\zeta^k u(z))$ when $z\in A_{r'} \,(\subseteq  u^{-1}(A_r))$. Fix  a point $z_0\in A_{r'} \cap [\mathbb{D}\backslash \Gamma]$,
  and let $U$ be a  small open disk  containing $z_0.$ Notice that both   $\{\rho_0,\rho_1,\cdots,\rho_{n-1}\}$ and
  $\{\widetilde{\rho}_0,\widetilde{\rho}_1,\cdots,\widetilde{\rho}_{n-1}\}$  are local inverses of $\phi^{-1} \circ \phi$ on $U$.
 So, after renumbering the local inverses if necessary,   we can suppose that $\rho_i=\widetilde{\rho}_i$ on $U$.
Since the domain $A_{r'} \cap [\mathbb{D}\backslash \Gamma]=A_{r'}\backslash \Gamma_0$ is connected and includes  $U$, one sees that $\rho_i=\widetilde{\rho}_i$ on this domain. Therefore,   the family of analytic functions $ \{\rho_i\cup \widetilde{\rho}_i\}$ defined as
\begin{displaymath}
 [\rho_i\cup \widetilde{\rho}_i](x)  = \left\{ \begin{array}{ll}
\rho_i(x) & \textrm{if $x\in \mathbb{D}\backslash \Gamma$}\\
\widetilde{\rho}_i(x) & \textrm{if $x\in A_{r'}$}
\end{array} \right.
\end{displaymath}
   are   local inverses  on   $A_{r'}\cup [\mathbb{D}\backslash \Gamma']$. We still denote them  by $\{\rho_i\}_i$ whenever no confusion arises.

Now let $S$ be a maximal subset of $\dot{\Gamma}_0$ on which these local inverses can't be  analytically continued   across. That is, $\{\rho_i\}_i$ are holomorphic  on the domain
$\mathbb{D}\backslash (\Gamma'\cup S) $, and can't be analytically continued  across    each point in $S$. We prove $S$ is empty by deriving contradiction. Indeed, assume $S$ is nonempty and let $$s= \inf \{t: t\beta_0+(1-t)\beta_1\in S\}.$$ Then $S$ is contained in the line segment from $z_0 =s\beta_0+(1-s)\beta_1$ to $\beta_1$.
Since $S\cap A_{r'}=\emptyset$, one sees that $0< s$ and $z_0$ is inside $\mathbb{D}$.  This means that one  can    analytically  extend the local inverses across $\{ t\beta_0+(1-t)\beta_1: t
 <s\} $, and  the process stops at $z_0.$ But, since $z_0$ is a regular point of $\phi$, there exists an   open disk $V=\{z:|z-z_0|<r_0\}$ with a small $r_0$,  such that
    $V \cap   \Gamma'=\emptyset$ and $\phi^{-1} \circ \phi$ has $n$ analytic branches on $V$.
 Notice that $$V \cap  [\mathbb{D}\backslash (\Gamma'\cup S)]=V\backslash S\supseteq V \backslash L,$$
where $L$ is a line segment  from the center $z_0$ to the boundary of the disk  $V$. It follows that $V \cap  [\mathbb{D}\backslash (\Gamma'\cup S)]$ is a connected domain. An  argument similar to that in the preceding paragraph shows that the local inverses are holomorphic  on $V \cup  [\mathbb{D}\backslash (\Gamma'\cup S)].$ By the maximality of $S$, we have that $V\cap S=\emptyset$, which leads to a  contradiction since $z_0\in \overline{S}$. Therefore, $S$ is empty and  the local inverses are holomorphic  on   $\mathbb{D}\backslash \Gamma'$, completing the proof.
\end{proof}

From the proof of the above lemma one derives an intrinsic order for the local inverses. Specifically, we label the local inverses $\{\rho_k(z)\}_{k=0}^{ n-1}$ such that  $\rho_k(z)=u^{-1} (\zeta^ k u(z)) $ on $\Omega$ for $0\leq k\leq n-1$. By a routine argument, we have that each $\rho_k$ is    invertible on $\Omega$, and for any pair  $\rho_k, \rho_{k'}$ and $ z\in \Omega$, we have
  $$\rho_k\circ \rho_{k'} (z)=\rho_{k+k' mod \,n} (z). $$
  Moreover, with little extra effort, one sees that each $\rho_k$ can also be analytically  continued   across the boundary $\mathbb{T}$.  We are now prepared to prove the main result.
\begin{thm}
Let $\phi$ be a finite Blaschke product of order $n$. Then the von Neumann algebra $\mathcal{A}_\phi=\{M_\phi,M_\phi^\ast\}'$ is commutative of dimension $q$, and hence $\mathcal{A}_\phi\cong\underbrace{\mathbb{C}\oplus\cdots\oplus \mathbb{C}}_q,$ where $q$ is the number   of connected components of the Riemann surface of $\phi^{-1}\circ \phi$.
\end{thm}
\begin{proof}
It suffices to show that   $ \mathcal{E}_j \mathcal{E}_i=\mathcal{E}_i\mathcal{E}_j$  for each $1\leq i,j\leq q$. Indeed,  for any $0\leq k,k'\leq n-1$, we have that $$\rho_k\circ \rho_{k'} (z)=\rho_k\circ \rho_{k'} (z)=\rho_{k+k' mod \,n} (z), z\in \Omega.$$
 Therefore,  for  any  $f\in L_a^2(\mathbb{D})$ and $z\in \Omega$, we have
\begin{eqnarray*}(\mathcal{E}_i \mathcal{E}_j f) (z)&=&\sum_{\rho\in G_i}\sum_{\widetilde{\rho}\in G_j} f(\widetilde{\rho}(\rho(z))) \widetilde{\rho}\,'(\rho(z))\rho\,'(z)
\\
&=&\sum_{\widetilde{\rho}\in G_j}\sum_{\rho\in G_i} f(\rho(\widetilde{\rho}(z))) \rho\,'(\widetilde{\rho}(z))\widetilde{\rho}\,'(z)=
(\mathcal{E}_j \mathcal{E}_i f) (z).\end{eqnarray*}
 This implies that $ \mathcal{E}_j \mathcal{E}_i (f)=\mathcal{E}_i\mathcal{E}_j(f)$ for any $f\in L_a^2(\mathbb{D})$,   completing the proof.
\end{proof}
By the final argument in the proof of \cite[Theorem 8.5]{DSZ}, the statement that $\mathcal{A}_\phi$ is commutative is equivalent to the statement that the minimal reducing subspaces for $\mathcal{M}_\phi$ are pairwise orthogonal. This also means that the number of  distinct minimal reducing subspaces of $M_\phi$ is equal to the dimension of $\mathcal{A}_\phi$.     Hence, one derives the following corollary giving the structure of the  reducing subspaces.
\begin{cor}
Let $\phi$ be a finite Blaschke product. Then the multiplication operator $M_\phi$ on the Bergman space $L_a^2(\mathbb{D})$  has exactly $q$ nontrivial minimal  reducing subspaces $\{\mathcal{M}_1,\cdots,\mathcal{M}_q\}$, and   $L_a^2(\mathbb{D})=\oplus_{k=1}^{q} \mathcal{M}_k$, where $q$ is the number of connected components of the Riemann surface $\phi^{-1}\circ \phi$.
\end{cor}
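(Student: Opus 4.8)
The plan is to reduce everything to the finite-dimensional structure of $\mathcal{A}_\phi$ already obtained in Theorem 2.3, translating the statements about reducing subspaces into statements about projections in the von Neumann algebra $\mathcal{A}_\phi=\{M_\phi,M_\phi^\ast\}'$. Since $\{M_\phi,M_\phi^\ast\}$ is a self-adjoint set, its commutant $\mathcal{A}_\phi$ is a von Neumann algebra, and the whole argument is purely algebraic once this observation is in place.

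First I would recall the standard correspondence between reducing subspaces and projections. A closed subspace $\mathcal{M}\subseteq L_a^2(\mathbb{D})$ reduces $M_\phi$ precisely when the orthogonal projection $P$ onto $\mathcal{M}$ commutes with $M_\phi$; taking adjoints then forces $P$ to commute with $M_\phi^\ast$ as well, so the reducing subspaces of $M_\phi$ are in bijection with the orthogonal projections lying in $\mathcal{A}_\phi$. Under this bijection, $\mathcal{M}=PL_a^2(\mathbb{D})$ contains a reducing subspace $\mathcal{N}=QL_a^2(\mathbb{D})$ if and only if $Q\le P$ as projections in $\mathcal{A}_\phi$; hence $\mathcal{M}$ is a nontrivial minimal reducing subspace exactly when $P$ is a minimal nonzero projection of $\mathcal{A}_\phi$.

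Next I would invoke Theorem 2.3, which supplies the isomorphism $\mathcal{A}_\phi\cong\mathbb{C}\oplus\cdots\oplus\mathbb{C}$ with $q$ summands. In such a finite-dimensional abelian von Neumann algebra every element is a tuple $(\lambda_1,\ldots,\lambda_q)$, so the projections are exactly the tuples with entries in $\{0,1\}$, the minimal nonzero projections are precisely the $q$ coordinate projections $P_1,\ldots,P_q$, and these satisfy $P_iP_j=0$ for $i\neq j$ together with $\sum_{k=1}^q P_k=I$. Setting $\mathcal{M}_k=P_kL_a^2(\mathbb{D})$, the previous paragraph then yields exactly $q$ nontrivial minimal reducing subspaces; the relations $P_iP_j=0$ for $i\neq j$ say that the $\mathcal{M}_k$ are pairwise orthogonal, and $\sum_k P_k=I$ gives the orthogonal direct sum decomposition $L_a^2(\mathbb{D})=\bigoplus_{k=1}^q\mathcal{M}_k$.

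Since the entire analytic content—commutativity of $\mathcal{A}_\phi$ and the computation of its dimension—is already furnished by Theorem 2.3, the only point that genuinely requires care is the bijection between minimal reducing subspaces and minimal projections (in particular verifying that minimality of $\mathcal{M}$ corresponds to minimality of $P$ in $\mathcal{A}_\phi$, which is where the double-commutant nature of $\mathcal{A}_\phi$ is used). Once that correspondence is secured, the conclusion follows from the elementary projection structure of $\mathbb{C}^q$, so I do not anticipate a substantial obstacle.
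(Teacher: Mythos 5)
Your proposal is correct and takes essentially the same approach as the paper: both deduce the corollary from Theorem 2.3 by identifying reducing subspaces of $M_\phi$ with the orthogonal projections in the abelian algebra $\mathcal{A}_\phi\cong\mathbb{C}^q$, whose minimal projections are pairwise orthogonal and sum to the identity. The only difference is that the paper outsources this projection/reducing-subspace dictionary to the final argument of \cite[Theorem 8.5]{DSZ}, whereas you spell it out explicitly.
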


\section{Reducing subspaces}
In order to facilitate the comprehension of the rather involved computations included in the present section, we analyze first a simple, transparent example. If $\phi=z^n$, then the family of local inverses is $ \{\rho_k(z)=\zeta^k z: 0\leq k\leq n-1 \},$ and  we can infer without difficulty   that
$$\mathcal{M}_j=\overline{span}\{z^i:\,i\geq 0,\, i\equiv j \,(mod\, n)\} , 1\leq j\leq n  $$
are the minimal reducing subspaces of $M_{z^n} $. However, such a simple argument is not available in the general case,  so we prefer to explain the above  description of the $\mathcal{M}_j$  in a less direct way, as follows. Recall for $\phi=z^n$, we have that
$$(\mathcal{E}_k f)(z)=f( \rho_k(z))\,\rho_k'(z)=k \zeta^{k }   f(\zeta^k   \, z),\,1\leq k\leq n.$$
One verifies then that $\mathcal{M}_j$ is the joint eigenspace for the $\mathcal{E}_k\,'s$ corresponding to the eigenvalues $\zeta^ {k\,j }.$ Therefore, every $\mathcal{M}_j$ is a reducing subspace since the $\{\mathcal{E}_k\}$ are normal operators  and
$ \mathcal{A}_\phi= {\text{span}} \{\mathcal{E}_1,\cdots,\mathcal{E}_n\}.$

There is a second, more geometric description of $\mathcal{M}_j$ which  emerges from this simple example. Let $F_j$ be the flat bundle on $\mathbb{D}_0=\mathbb{D}\backslash \{0\}$ with respect to the jump $\zeta^j$
(see \cite{AD} for the precise definition). Roughly speaking, we cut   $\mathbb{D}_0$ along the line $(0,1)$ in $\mathbb{D}_0$, put the rank-one trivial holomorphic bundle over it, and identify the vector $v$ on the  lower copy of  $(0,1)$ with the vector $\zeta^j v$ on the above copy of  $(0,1)$. Then $F_j$ is just the quotient space obtained from this process. One can easily see that the  $F_j\,'s$  are all the flat line bundles whose pullback bundle  to $\mathbb{D}_0$ induced by the map $z^n: \mathbb{D}_0\to \mathbb{D}_0$ is the trivial bundle. This means that each holomorphic section on $F_j$ yields a holomorphic function on $\mathbb{D}_0$ by the induced composition. Let
$$L_a^2(F_j)=\{\text {holomorphic }s:\mathbb{D}_0 \to F_j :\int_{\mathbb{D}_0} |s|^2 dm<\infty\},$$
  and let $M_z$ be the corresponding bundle shift on $L_a^2(F_j)$. Note that $|s| $ is well defined on $\mathbb{D}_0$.
Then the operator $U_j: L_a^2(F_j)\to \mathcal{M}_j [\subseteq L_a^2(\mathbb{D})]$ defined by $(U_j f)(z)=n z^{n-1} f(z^n)$ is a unitary map, which intertwines $(L_a^2(F_j),M_{z})$ and $(\mathcal{M}_j,M_{z^n}).$ In this way flat line bundles provide  a natural model for the action of $M_{z^n}$ on the minimal  reducing subspaces of $M_{z^n}$.  It is conceivable that   some analogous  geometric description exists for the action of $M_\phi$ on the minimal reducing subspaces  in general, but, if so, we do not know how to describe it. Thus we follow a different  path below.

  Returning to the general case of a finite Blaschke product $\phi$, we will establish  the following main theorem in this section. Recall that the dual partition for $\phi$ is the partition of the set $\{0,1,\cdots,n-1\}$ for the equivalence relation defined  in $(\ref{dual}).$ We will prove lately
  that the number of  components in  the dual partition is also equal to $q$, the number of connected components of the Riemann surface for $\phi^{-1}\circ \phi$.
   \begin{thm}
 Let $\phi$ be a finite Blaschke product, and $\{G'_1,\cdots,G'_q\}$ be the dual partition for $\phi$. Then the multiplication operator $M_\phi$   has exactly $q$ nontrivial minimal  reducing subspaces $\{\mathcal{M}_1,\cdots,\mathcal{M}_q\},$ and for any $1\leq j\leq q$
 $$ \mathcal{M}_j=\{f\in {\mathcal O}(\mathbb{D}): f|_\Omega \in  \mathcal{L}_j^\Omega\},$$
 where $\Omega=u^{-1}(A_r)$ is defined in Lemma 2.1, and $\mathcal{L}^\Omega_j$ is a subspace of $L^2(\Omega) $ with the orthogonal basis  $ \{u^i u':   i+1(mod\,n) \in G'_j \}.$
 \end{thm}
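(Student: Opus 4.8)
The plan is to establish the theorem by leveraging the eigenspace description already indicated in the $\phi=z^n$ model and pushing it through the extended local inverses on $\Omega$. The key structural fact, proved in Lemma~\ref{u} and the discussion following Lemma~\ref{loc}, is that on $\Omega$ the local inverses are exactly $\rho_k(z)=u^{-1}(\zeta^k u(z))$ and they compose as $\rho_k\circ\rho_{k'}=\rho_{k+k'\bmod n}$, so the cyclic group $\mathbb{Z}/n\mathbb{Z}$ acts on $\Omega$ by these biholomorphisms. First I would pull every computation back to the annulus $A_r$ via the invertible map $u$. Concretely, I expect that the natural functions to diagonalize the $\mathcal{E}_k$ on $\Omega$ are $u^i u'$ for $0\le i\le n-1$: one computes $(u^i u')\circ\rho_k\cdot\rho_k'=\zeta^{ki}\,u^i u'$ by the chain rule together with $u(\rho_k(z))=\zeta^k u(z)$, so that each $u^i u'$ is a common eigenvector for the full family of single-sheet transfer operators with eigenvalue $\zeta^{ki}$. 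Summing over $\rho\in G_m$ then shows that $u^i u'$ is an eigenvector for $\mathcal{E}_m$ with eigenvalue $\sum_{\rho_k\in G_m}\zeta^{ki}$.

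Next I would reorganize these eigenspaces according to which of them share a common eigenvalue-tuple across all $\mathcal{E}_m$. Two indices $i_1,i_2$ give the same joint eigenvalue for every $\mathcal{E}_m$ precisely when $\sum_{\rho_k\in G_m}\zeta^{ki_1}=\sum_{\rho_k\in G_m}\zeta^{ki_2}$ for all $m$, which is exactly the defining relation $(\ref{dual})$ for the dual partition once one matches the index shift $i+1\equiv j\pmod n$ appearing in the statement. Since $\mathcal{A}_\phi=\mathrm{span}\{\mathcal{E}_1,\dots,\mathcal{E}_q\}$ is, by Theorem~2.3, a commutative von Neumann algebra, its minimal projections correspond to the distinct joint eigenvalue-tuples, hence to the blocks $G'_1,\dots,G'_p$ of the dual partition. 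The space $\mathcal{L}^\Omega_j\subseteq L^2(\Omega)$ spanned by the relevant $\{u^i u'\}$ is then the restriction to $\Omega$ of the joint eigenspace, and orthogonality of the basis follows from orthogonality of the monomials $w^i\,dw$ on the annulus $A_r$ transported back through the invertible $u$.

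To finish I would identify $\mathcal{M}_j$ with $\{f\in\mathcal{O}(\mathbb{D}): f|_\Omega\in\mathcal{L}^\Omega_j\}$. The containment of the minimal reducing subspace inside this set follows from the eigenspace computation above, since each minimal reducing subspace is a joint eigenspace of the commuting normal operators $\mathcal{E}_m$ and its elements, restricted to $\Omega$, must be expressible in the eigenbasis. For the reverse containment and for minimality I would invoke Corollary~2.4, which already guarantees that $M_\phi$ has exactly $q$ nontrivial minimal reducing subspaces decomposing $L^2_a(\mathbb{D})$ orthogonally; a dimension/count match then forces $p=q$ (this is precisely the content of the companion corollary) and identifies each block $G'_j$ with a unique $\mathcal{M}_j$. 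The main obstacle I anticipate is the analytic, rather than purely formal, part: the functions $u^i u'$ live only on $\Omega$, a proper subdomain, and $\mathcal{L}^\Omega_j$ is a subspace of $L^2(\Omega)$, so I must argue that a global holomorphic $f\in L^2_a(\mathbb{D})$ whose restriction lands in $\mathcal{L}^\Omega_j$ genuinely lies in the corresponding spectral subspace of the globally-defined bounded operators $\mathcal{E}_m$, and conversely that the spectral subspace is faithfully detected by restriction to $\Omega$. This requires using that $\Omega$ contains an annular neighborhood of $\mathbb{T}$ together with the uniqueness of analytic continuation, so that restriction to $\Omega$ is injective on $\mathcal{O}(\mathbb{D})$ and compatible with the action of each $\mathcal{E}_m$; reconciling the local eigen-expansion on $\Omega$ with the global spectral decomposition is where the delicate work lies.
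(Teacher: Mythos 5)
Your outline does track the paper's own strategy in broad strokes---diagonalizing the transfer operators by the functions $u^iu'$, grouping joint eigenvalue tuples according to the dual partition, and pulling everything back through restriction to $\Omega$---but it has a genuine gap at precisely the step you dismiss as ``a dimension/count match then forces $p=q$.'' That equality cannot be used as an input: in the paper it is Corollary 3.10, and it is \emph{derived from} the proof of this very theorem (Proposition 3.9), so invoking ``the companion corollary'' is circular. What spectral theory gives you for free is only one inequality. Since each minimal reducing subspace $\mathcal{M}_j$ carries a distinct eigenvalue tuple $(c_{kj})_k$ and restricts into the block of the dual partition whose tuple $(c'_{k m})_k$ matches it (the paper's Proposition 3.5), the assignment $j\mapsto k_j$ is injective and $q\le p$. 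The converse---that \emph{every} block $\mathcal{L}_k^\Omega$ is realized by some $\mathcal{M}_j$---is an analytic fact, not a counting one. The image of $L^2_a(\mathbb{D})$ under restriction is $L^2_{a,p}(\Omega)$, the closure of the polynomials in $L^2(\Omega)$, which the paper is careful to distinguish from $L^2_a(\Omega)$ (functions such as $z^{-1}$ witness that they differ); a priori some eigenspace $\mathcal{N}_k^\Omega$ could be orthogonal to $L^2_{a,p}(\Omega)$, in which case $p>q$ and your identification of blocks with minimal reducing subspaces would fail. The paper closes exactly this hole with Lemma 3.8 (resting on Lemmas 3.6 and 3.7): if $\langle z^i,u^ku'\rangle_{L^2(\Omega)}=0$ for all $i\ge 0$, then transporting the pairing to the unit circle via the unitary $C_u$ and the change-of-variables formula yields $u^{k+1}\in\overline{H^2(\mathbb{T})}$, hence $\phi^{k+1}=u^{n(k+1)}$ is holomorphic on $\mathbb{D}$ and conjugate-analytic on $\mathbb{T}$, forcing it to be constant---a contradiction since $\phi$ is a nonconstant Blaschke product. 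Nothing in your proposal supplies this argument or any substitute for it.

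Two smaller corrections. First, the eigenbasis on $\Omega$ must be $\{u^iu': i\in\mathbb{Z}\}$, all integer powers, not just $0\le i\le n-1$: completeness in $L^2_a(\Omega)$ comes from unitarity of $C_u\colon L^2_a(A_r)\to L^2_a(\Omega)$ together with the fact that $\{z^k\}_{k\in\mathbb{Z}}$ is a complete orthogonal basis of the Bergman space of the annulus, and each $\mathcal{N}_i^\Omega$ is the closed span of the $u^ku'$ with $k\equiv i-1 \ (\mathrm{mod}\ n)$, $k\in\mathbb{Z}$. Without completeness you cannot conclude that the joint eigenspaces of the $\mathcal{E}_k^\Omega$ are exactly the $\mathcal{L}_j^\Omega$. (Also, the eigenvalue of $U^\Omega_{\rho_k}$ on $u^iu'$ is $\zeta^{k(i+1)}$, not $\zeta^{ki}$; you flag the index shift verbally but your displayed computation is off by the factor coming from differentiating $u\circ\rho_k=\zeta^k u$.) Second, the ``delicate work'' you defer at the end is more than injectivity of restriction: you need both halves of the paper's Lemma 3.2, namely that $i_\Omega$ is invertible from $L^2_a(\mathbb{D})$ onto $L^2_{a,p}(\Omega)$ (so that, using normality, $i_\Omega P_{\mathcal{M}_j}i_\Omega^{-1}$ is the orthogonal projection onto $\mathcal{M}_j^\Omega$ and the spectral identities (3.4)--(3.7) transport correctly), and that $L^2_a(\mathbb{D})=\{f\in\mathcal{O}(\mathbb{D}): f|_\Omega\in L^2_a(\Omega)\}$ (so that membership of $f|_\Omega$ in $\mathcal{L}_j^\Omega\subseteq L^2_a(\Omega)$ already forces $f$ to be Bergman-class on all of $\mathbb{D}$). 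These steps are what make the final equality $\mathcal{M}_j=\{f\in\mathcal{O}(\mathbb{D}): f|_\Omega\in\mathcal{L}_j^\Omega\}$ legitimate, and they need to be carried out, not just acknowledged.
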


The remainder  of this section is devoted to the proof of this theorem. We begin with  a characterization of the $\mathcal{M}_j\,'s$ in term of  eigenvalues and eigenspaces   of the $\mathcal{E}_k\,'s$.   Adapting, step by step, the proof of \cite[Theorem 8.5]{DSZ}, we infer that
 $$\mathcal{A}_\phi=\{M_\phi,M_\phi^\ast\}'= {\text{span}} \{\mathcal{E}_1,\cdots,\mathcal{E}_q\}= {\text{span}} \{P_{\mathcal{M}_1},\cdots,P_{\mathcal{M}_q}\},$$
 where   $P_{\mathcal{M}_k}$ is the projection onto $ \mathcal{M}_k$ for $1\leq k\leq q$.
 This means that there are unique constants $\{c_{ kj},1\leq  j,k\leq q\} $ such that \begin{equation}\label{ckj}\mathcal{E}_k=\sum_{1\leq  j\leq q} c_{ kj} P_{\mathcal{M}_j}.\end{equation} On the other hand, by a   dimension argument, the constant matrix $[c_{ kj}]$ is seen to be invertible. Since the rows of $[c_{kj}]$ are linearly independent, it follows that $c_{k\,j_1}=c_{k\,j_2}$ for each $k$ if and only if $j_1=j_2$.

 For each   tuple $\{c_{kj}\}_k$, let $\widetilde{\mathcal{M}}_j=\{f\in L_a^2(\mathbb{D}): \mathcal{E}_k f=c_{kj } f, \, 1\leq k \leq  q\}$ be the corresponding common eigenspace for $\{\mathcal{E}_1,\cdots,\mathcal{E}_q\}$. As shown in Theorem 2.3, each $\mathcal{E}_k$ is a normal operator. By  spectral theory, $\widetilde{\mathcal{M}}_{j_1} \bot \widetilde{\mathcal{M}}_{j_2}$ if $j_1\neq j_2$. By the fact that $ {\mathcal{M}_j}\subseteq  \widetilde{\mathcal{M}}_j$ for each $j$, we have that $\widetilde{\mathcal{M}}_{j } \bot  {\mathcal{M}}_{k}$ for $j \neq k$. Noticing that $L_a^2(\mathbb{D})=\oplus_k \mathcal{M}_k$, one sees that $ {\mathcal{M}_j}= \widetilde{\mathcal{M}}_j.$ That is,
\begin{equation}\label{sub} {\mathcal{M}_j}=\{f\in L_a^2(\mathbb{D}): \mathcal{E}_k f=c_{kj } f, \, 1\leq k \leq  q\}.\end{equation}

We also need the following lemmas concerning the  domain $\Omega=u^{-1}(A_r)$. Let
 $L_{a }^2(\Omega)$ be the Bergman space which consists of
the holomorphic  functions in  $L^2(\Omega)$, and let $L_{a,p}^2(\Omega)$ be the subspace of $L^2(\Omega)$ which is the closure of the polynomial ring in   $L^2(\Omega)$. Note that since $z^{-1}\in L^2(\Omega)$, we have that $L_{a,p}^2(\Omega)\neq L^2(\Omega).$ Recall that ${\mathcal O}(\mathbb{D})$ denotes the space of holomorphic functions on $\mathbb{D}$.
 \begin{lem} The restriction operator $i_{\Omega}: L_{a }^2(\mathbb{D})\to L_{a,p}^2(\Omega)$ defined by $i_{\Omega}(f)=f|_{\Omega}$
 is invertible. Furthermore, we have that  $L_{a }^2(\mathbb{D})=\{f\in {\mathcal O}(\mathbb{D}) : f|_\Omega\in L_{a}^2(\Omega)\}.$
 \end{lem}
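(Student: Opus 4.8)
The plan is to prove invertibility of $i_\Omega$ by showing it is bounded and injective with closed range equal to $L^2_{a,p}(\Omega)$, the crux being a coercivity (bounded-below) estimate; the second assertion then follows from a routine splitting of the integral over $\mathbb{D}$. The one geometric fact driving everything is that $\Omega=u^{-1}(A_r)$ contains a genuine annular collar. Indeed, by Lemma~\ref{u} we have $u(\mathbb{T})=\mathbb{T}$ while $|u|^n=|\phi|<1$ on $\mathbb{D}$, and the arc $L$ is bounded away from $\mathbb{T}$; so the continuity of $u$ forces $|u(z)|>r$ whenever $|z|$ is close enough to $1$. Hence there is $r'<1$ with $A_{r'}=\{z:r'<|z|<1\}\subseteq\Omega$, and consequently $\mathbb{D}\setminus\Omega$ is a relatively compact subset of $\mathbb{D}$.

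First I would check that $i_\Omega$ is a well-defined bounded operator into $L^2_{a,p}(\Omega)$. Because $\Omega\subseteq\mathbb{D}$, the restriction $f\mapsto f|_\Omega$ is a contraction from $L^2_a(\mathbb{D})$ into $L^2(\Omega)$. Since the polynomials are dense in $L^2_a(\mathbb{D})$ and are carried to polynomials, continuity places the image inside the polynomial closure $L^2_{a,p}(\Omega)$. Injectivity is immediate from the identity theorem, as $\Omega$ is a nonempty open set.

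The heart of the argument is the coercivity estimate. Writing $f=\sum_{k\ge 0}\hat{f}(k)z^k$ and integrating in polar coordinates, I would compute $\|f\|_{L^2(A_{r'})}^2=\sum_k|\hat{f}(k)|^2\,\frac{\pi(1-(r')^{2k+2})}{k+1}$ and compare it with $\|f\|_{L^2_a(\mathbb{D})}^2=\sum_k|\hat{f}(k)|^2\,\frac{\pi}{k+1}$. Since $1-(r')^{2k+2}\ge 1-(r')^2>0$ uniformly in $k$, this yields $\|f\|_{L^2_a(\mathbb{D})}\le (1-(r')^2)^{-1/2}\,\|f\|_{L^2(A_{r'})}\le C\,\|i_\Omega f\|_{L^2(\Omega)}$. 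Thus $i_\Omega$ is bounded below, so its range is closed and $i_\Omega^{-1}$ is bounded on that range. As the closed range contains every polynomial and is contained in $L^2_{a,p}(\Omega)$, it must equal $L^2_{a,p}(\Omega)$; hence $i_\Omega$ is invertible. For the final identity, the inclusion $\subseteq$ is clear since the restriction of an $L^2_a(\mathbb{D})$ function lies in $L^2_a(\Omega)$. For $\supseteq$, given $f\in{\mathcal O}(\mathbb{D})$ with $f|_\Omega\in L^2_a(\Omega)$, I would split $\int_{\mathbb{D}}|f|^2=\int_{\Omega}|f|^2+\int_{\mathbb{D}\setminus\Omega}|f|^2$: the first term is finite by hypothesis, and the second is finite because $f$ is continuous on the relatively compact set $\mathbb{D}\setminus\Omega$.

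I expect the main (though mild) obstacle to be securing the coercivity constant uniformly in $k$, which hinges precisely on $\Omega$ containing the full annular collar $A_{r'}$; without the uniform lower bound on $1-(r')^{2k+2}$ the estimate would degenerate as $k\to\infty$. Establishing the collar from $u(\mathbb{T})=\mathbb{T}$ is therefore the only genuinely geometric input, and the remaining steps are soft functional analysis (density, the identity theorem, and closedness of the range of a bounded-below operator).
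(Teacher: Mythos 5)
Your proof is correct, and it splits naturally into a part that mirrors the paper and a part that genuinely diverges from it. For the invertibility of $i_\Omega$, both you and the paper rest on exactly the same two pillars: the collar $A_{r'}\subseteq\Omega$ and the estimate $\|f\|_{L^2_a(\mathbb{D})}\le C_{r'}\|f\|_{L^2(A_{r'})}$. The paper simply quotes this inequality as well known for polynomials and concludes from the density of polynomials in both $L^2_a(\mathbb{D})$ and $L^2_{a,p}(\Omega)$; you instead prove the inequality for every $f\in L^2_a(\mathbb{D})$ via the Taylor-coefficient computation (using $1-(r')^{2k+2}\ge 1-(r')^2$ uniformly in $k$) and then conclude by the bounded-below/closed-range argument, with the range pinned down as $L^2_{a,p}(\Omega)$ because it is closed, contains the polynomials, and sits inside their closure. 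This is the same functional analysis in different packaging, with the merit that you actually supply a proof of the quoted inequality. For the second assertion the routes differ in substance: the paper expands $f$ in its Taylor series, uses orthogonality of the monomials in $L^2(A_{r'})$ to show $f|_{A_{r'}}\in L^2_{a,p}(A_{r'})$, applies the invertibility statement (for the annulus) to produce $g\in L^2_a(\mathbb{D})$ with $g=f$ on $A_{r'}$, and finishes with the identity theorem; you instead note that $\mathbb{D}\setminus\Omega\subseteq\{|z|\le r'\}$ is relatively compact in $\mathbb{D}$, so $\int_{\mathbb{D}}|f|^2\,dm=\int_{\Omega}|f|^2\,dm+\int_{\mathbb{D}\setminus\Omega}|f|^2\,dm$ is finite because $f$ is bounded on the second region. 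Your version is shorter and more elementary; the paper's version yields the extra piece of information that $f|_{A_{r'}}$ is polynomially approximable in $L^2(A_{r'})$, which is in the spirit of the space $L^2_{a,p}$ used throughout Section 3 but is not needed for the statement itself.
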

 \begin{proof} As shown in the proof of Lemma $\ref{loc}$, there exists $r'>0$ such that $A_{r'}\subseteq \Omega$.
 It's well known that   there exists a positive constant $C_{r'}$ such that for any polynomial $f$
 $$\|f\|_ {L_{a }^2(\mathbb{D})}\leq C_{r'}\|f\|_ {L^2(A_{r'})}  .$$
 This implies for any polynomial $f$ that   $$  \|f\|_ {L ^2(\mathbb{D})}\leq C_{r'}\|f\|_ {L^2(A_{r'})} \leq C_{r'}\|f\|_ {L^2(\Omega)}  \leq C_{r'}\|f\|_ {L^2(\mathbb{D})}.$$
  Noticing that the polynomial ring is dense in both of the two Hilbert spaces $L^2_a(\mathbb{D})$ and $L_{a,p}^2(\Omega)$,  one sees that $i_{\Omega}$ is invertible.
%

In addition, we have that
$$L_{a }^2(\mathbb{D})=\{f\in {\mathcal O}(\mathbb{D}) : f|_\Omega\in L_{a,p}^2(\Omega)\}\subseteq \{f\in {\mathcal O}(\mathbb{D}) : f|_\Omega\in L_{a}^2(\Omega)\} .$$
It remains to show that, if  $f\in {\mathcal O}(\mathbb{D})$ and   $f|_\Omega\in L_{a }^2(\Omega)$, then $f\in L_{a }^2(\mathbb{D})$. Indeed, since $A_{r'}\subseteq \Omega$, one sees that $f|_ {A_{r'} }\in L_a^2(A_{r'})$. Let $f=\sum_{k=0}^\infty a_k z^k$ be the Taylor series expansion  for $f$ on $\mathbb{D}$. Since $\{z^k\}_k$ are pairwise orthogonal in $L_a^2(A_{r'})$, we have that the polynomial  $p_n=\sum_{k=0}^n a_k z^k$ tends to $f$ in the norm of $L_a^2(A_{r'})$ and hence $f\in L_{a,p}^2(A_{r'})$.
 Therefore, by the argument in the preceding paragraph,  there exists
$g\in L_{a }^2(\mathbb{D})$ such that $f|_{A_{r'}} =g|_{A_{r'}} $. This means that $f=g \in L_{a }^2(\mathbb{D})$, as desired.
\end{proof}
 Now we introduce  operators on $L_{a }^2(\Omega)$ and $L_{a,p}^2(\Omega)$ corresponding to $\{\mathcal{E}_i\}$. To simplify notation, we also let $M_\phi$ denote the multiplication operator on $L_{a }^2(\Omega)$ or $L_{a,p}^2(\Omega)$  with the bounded analytic symbol $\phi$. Recall  that each   $\rho \in \{\rho_j\}_{j=0}^{n-1} $ is invertible on  $\Omega$. Hence,   the operator $U^\Omega_{\rho }: L_a^2(\Omega) \to L_a^2(\Omega)$ defined by $U^\Omega_{\rho }(f)=(f\circ\rho )\,\rho\,' $ is a unitary operator with the inverse $U^\Omega_{\rho^{-1}}$. Similarly, for each $1\leq k\leq q$, define a linear operator
  $\mathcal{E}^\Omega_k:L_{a }^2(\Omega) \to L_{a }^2(\Omega)$ as
  $$\mathcal{E}^\Omega_k(f)=\sum_{ \rho \in G_k} U^\Omega_{\rho  }(f)=\sum_{ \rho \in G_k} (f\circ\rho  )\,\rho ',\,f\in L_{a }^2(\Omega).$$
Moreover, for each $f\in L_{a,p }^2(\Omega)$, there exists some $g\in L_{a }^2(\mathbb{D})$ such that $g|_\Omega=f$. A direct computation shows that
  $\mathcal{E}_k(g)|_\Omega=\mathcal{E}^\Omega _k(f)$. Hence, one sees that $\mathcal{E}^\Omega _k(f)\in L_{a,p }^2(\Omega) $. This means that $\mathcal{E}^\Omega_k$ is also a bounded  operator on  $L_{a,p }^2(\Omega) $ and $i_{\Omega} \mathcal{E}_k= \mathcal{E}^\Omega_k i_{\Omega}$. Combining this identity with  formula (\ref{ckj}) we obtain
  \begin{equation}\label{ockj}
  \mathcal{E}^\Omega_k (f)  =\sum_{1\leq  j\leq q} c_{ kj} i_{\Omega} P_{\mathcal{M}_j} i_{\Omega}^{-1} (f), \,f\in L_{a,p }^2(\Omega) .
  \end{equation}
   Furthermore, by \cite[Lemma 7.4]{DSZ}, for each $1\leq k\leq q$  there is an integer $k^-$ with $1\leq k^-\leq q$ such that
   $$G_{k^-}=G^-_k=\{\rho^{-1}:\rho \in G_k\} .$$
   Using an argument similar to that for \cite[Lemma 7.5]{DSZ}, we find that $\mathcal{E}^\Omega_{k^-}=\mathcal{E}^{\Omega\ast}_{k}  .$
   Therefore, $L_{a,p }^2(\Omega)$ is a common reducing subspace of $\{\mathcal{E}_k^\Omega\}$ and each $\mathcal{E}_{k}^\Omega$ is a normal operator on $  L_{a,p }^2(\Omega)$.

   For every $1\leq j\leq q$, let $$\mathcal{M}^\Omega_j=i_{\Omega}(\mathcal{M}_j)=\{f|_{\Omega}: f\in \mathcal{M}_j\} .$$  We claim   that
    $i_{\Omega} P_{\mathcal{M}_j} i_{\Omega}^{-1}= P_{\mathcal{M}^\Omega_j} $. Since the range of $ i_{\Omega} P_{\mathcal{M}_j} i_{\Omega}^{-1}$ is
    equal to $\mathcal{M}^\Omega_j$, it suffices to show that $i_{\Omega} P_{\mathcal{M}_j} i_{\Omega}^{-1}$  is a projection. Indeed, a direct computation
     shows that $i_{\Omega} P_{\mathcal{M}_j} i_{\Omega}^{-1}$ is an idempotent. Furthermore, combining formula (\ref{ockj}) and the fact  that $[c_{kj}]$ is invertible,
     every $i_{\Omega} P_{\mathcal{M}_j} i_{\Omega}^{-1}$ is a linear combination of $\{\mathcal{E}_{k}^\Omega\}.$ It follows that every
     $i_{\Omega} P_{\mathcal{M}_j} i_{\Omega}^{-1}$ is a normal operator. Therefore,  $i_{\Omega} P_{\mathcal{M}_j} i_{\Omega}^{-1}$ is a projection and $i_{\Omega} P_{\mathcal{M}_j} i_{\Omega}^{-1}= P_{\mathcal{M}^\Omega_j} $.

We summarize the consequences of the above argument as follows.
\begin{prop}
Using the notation above, $L^2_{a,p}(\Omega)=\oplus_{j=1}^q\mathcal{M}^\Omega_j,$ and
 \begin{equation}\label{ors} {\mathcal{M}^\Omega_j}=\{f\in L_{a,p}^2(\mathbb{D}): \mathcal{E}^\Omega_k f=c_{kj } f, \, 1\leq k \leq  q\}.\end{equation}
In addition, one has      \begin{equation}\label{ockj2}
  \mathcal{E}^\Omega_k (f)  =\sum_{1\leq  j\leq q} c_{ kj}   P_{\mathcal{M}_j}^{\Omega}  (f), \,f\in L_{a,p }^2(\Omega) .
  \end{equation}
  \end{prop}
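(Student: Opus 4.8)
The plan is to assemble the three assertions directly from the facts already accumulated, the decisive one being the identity $i_{\Omega} P_{\mathcal{M}_j} i_{\Omega}^{-1}=P_{\mathcal{M}_j^\Omega}$ verified just above the statement. The last formula (\ref{ockj2}) is then immediate: substituting this identity into (\ref{ockj}) gives, for every $f\in L_{a,p}^2(\Omega)$, that $\mathcal{E}^\Omega_k(f)=\sum_{1\leq j\leq q} c_{kj}\,i_{\Omega} P_{\mathcal{M}_j} i_{\Omega}^{-1}(f)=\sum_{1\leq j\leq q} c_{kj}\,P_{\mathcal{M}_j^\Omega}(f)$, which is exactly (\ref{ockj2}). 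So no new work is needed for that part.

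For the orthogonal direct sum I would start from the corollary following Theorem~2.3, which gives $L_a^2(\mathbb{D})=\oplus_{k=1}^q\mathcal{M}_k$; equivalently, $\sum_j P_{\mathcal{M}_j}=I$ on $L_a^2(\mathbb{D})$ and $P_{\mathcal{M}_{j_1}}P_{\mathcal{M}_{j_2}}=0$ for $j_1\neq j_2$. Conjugating both relations by the invertible operator $i_{\Omega}$ and invoking $i_{\Omega} P_{\mathcal{M}_j} i_{\Omega}^{-1}=P_{\mathcal{M}_j^\Omega}$ yields $\sum_j P_{\mathcal{M}_j^\Omega}=I$ on $L_{a,p}^2(\Omega)$ and $P_{\mathcal{M}_{j_1}^\Omega}P_{\mathcal{M}_{j_2}^\Omega}=i_{\Omega} P_{\mathcal{M}_{j_1}}P_{\mathcal{M}_{j_2}} i_{\Omega}^{-1}=0$. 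Since each $P_{\mathcal{M}_j^\Omega}$ has already been shown to be an \emph{orthogonal} projection, these two relations say precisely that $L_{a,p}^2(\Omega)=\oplus_{j=1}^q\mathcal{M}_j^\Omega$ is an orthogonal decomposition. The point to emphasize, and the one genuinely delicate input, is that $i_{\Omega}$ is merely invertible and not unitary, so the orthogonality of the pieces $\mathcal{M}_j^\Omega$ does \emph{not} transfer automatically from that of the $\mathcal{M}_j$; it rests entirely on the prior identification of $i_{\Omega} P_{\mathcal{M}_j} i_{\Omega}^{-1}$ as a self-adjoint (normal idempotent) projection.

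For the eigenspace description (\ref{ors}), the inclusion $\subseteq$ is read off from (\ref{ockj2}): if $f\in\mathcal{M}_j^\Omega$ then $P_{\mathcal{M}_{j'}^\Omega}f=\delta_{jj'}f$, so $\mathcal{E}^\Omega_k f=c_{kj}f$ for every $k$. For the reverse inclusion I would take any $f\in L_{a,p}^2(\Omega)$ with $\mathcal{E}^\Omega_k f=c_{kj}f$ for all $k$, expand $f=\sum_{j'} f_{j'}$ along the orthogonal decomposition with $f_{j'}\in\mathcal{M}_{j'}^\Omega$, and apply $\mathcal{E}^\Omega_k$ via (\ref{ockj2}) to obtain $\sum_{j'}(c_{kj'}-c_{kj})f_{j'}=0$ for every $k$. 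As the summands lie in mutually orthogonal subspaces, each term vanishes, so $(c_{kj'}-c_{kj})f_{j'}=0$ for all $k$ and $j'$; and since $c_{kj'}=c_{kj}$ for every $k$ forces $j'=j$ (the distinctness property recorded after (\ref{ckj}), a consequence of the invertibility of $[c_{kj}]$), every $f_{j'}$ with $j'\neq j$ must be zero. Hence $f=f_j\in\mathcal{M}_j^\Omega$, which establishes (\ref{ors}). Since the analytic content was already spent in proving (\ref{ockj}), the normality of each $\mathcal{E}^\Omega_k$ on $L_{a,p}^2(\Omega)$, and the projection identity, the only real care required in this proposition is bookkeeping: ensuring that the partition-of-unity and orthogonality relations survive conjugation by the non-unitary $i_{\Omega}$, with orthogonality supplied by the separately established self-adjointness of the $P_{\mathcal{M}_j^\Omega}$.
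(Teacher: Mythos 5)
Your proposal is correct and follows essentially the same route as the paper: both rest on the identity $i_{\Omega} P_{\mathcal{M}_j} i_{\Omega}^{-1}=P_{\mathcal{M}^\Omega_j}$ established just before the statement, obtain (\ref{ockj2}) by substituting it into (\ref{ockj}), and get $L^2_{a,p}(\Omega)=\oplus_{j=1}^q\mathcal{M}^\Omega_j$ by conjugating the relations $\sum_j P_{\mathcal{M}_j}=I$ and $P_{\mathcal{M}_i}P_{\mathcal{M}_j}=0$ by $i_{\Omega}$, with orthogonality supplied by the previously proved self-adjointness of the $P_{\mathcal{M}^\Omega_j}$. The only cosmetic difference is in (\ref{ors}), where the paper simply reruns the joint-eigenspace argument from the beginning of the section, while you make the reverse inclusion explicit by expanding along the orthogonal decomposition and invoking the distinctness of the columns of $[c_{kj}]$ --- logically the same content.
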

  \begin{proof} Equation (\ref{ockj2}) follows from formula (\ref{ockj}) and the fact that $i_{\Omega} P_{\mathcal{M}_j} i_{\Omega}^{-1}= P_{\mathcal{M}^\Omega_j} $.  Combining this with the same argument in the beginning of the section,  one sees (\ref{ors}).

  Moreover, since $$P_{\mathcal{M}^\Omega_{i}}\,P_{\mathcal{M}^\Omega_{j}}=i_{\Omega} P_{\mathcal{M}_i} P_{\mathcal{M}_j} i_{\Omega}^{-1} =0$$ if $i\neq j$ and
  $$\sum_{j=1}^q  P_{\mathcal{M}^\Omega_{j }}=\sum_{j=1}^q  i_{\Omega} P_{\mathcal{M}_j} i_{\Omega}^{-1}=I,$$ we have that $L^2_{a,p}(\Omega)=\oplus_j\mathcal{M}^\Omega_j,$ completing the proof.
  \end{proof}
   Since   $\rho_1$ is invertible and $\rho_1^n=1$ on $\Omega$, the operator $U^\Omega_{\rho_1 }: L_a^2(\Omega) \to L_a^2(\Omega)$ is unitary and $(U^\Omega_{\rho_1 })^n=1$.    By the spectral theory for unitary operators, the $\{\zeta^i\}_{i=0}^{n-1}$ are possible  eigenvalues of $U^\Omega_{\rho_1 }$, and
   $ U^\Omega_{\rho_1 }  =\sum_{i=0}^{n-1} \zeta^{i } P_{\mathcal{N}^\Omega_i},$
   where $ P_{\mathcal{N}^\Omega_i}$ is the projection from $L_{a }^2(\Omega)$ onto the eigenvector subspace
   $$\mathcal{N}^\Omega_i=\{f\in L_a^2(\Omega): U^\Omega_{\rho_1 }(f)=\zeta^i f\}.$$
    It follows that $U^\Omega_{\rho_j }=(U^\Omega_{\rho_1 } )^j=\sum_{i=0}^{n-1} \zeta^{i\,j} P_{\mathcal{N}^\Omega_i},$
    and \begin{equation}\label{ockj3}\mathcal{E}^\Omega_k(f) =\sum_{\rho_j\in G_k} \sum_{i=0}^{n-1} \zeta^{i\,j} P_{\mathcal{N}^\Omega_i} (f),\, f\in L_{a }^2(\Omega). \end{equation}
    Furthermore,  we have the following lemma. Recall that $u:\Omega=u^{-1}(A_r)\to A_r$ is invertible    as shown  in Lemma 2.1.
\begin{lem}
$\mathcal{N}^\Omega_i=\overline{span} \{u^k u': k\in\mathbb{Z}, k+1\equiv i\, mod\, n  \}$.
\end{lem}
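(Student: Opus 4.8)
The plan is to transport the entire problem to the annulus $A_r$ via the biholomorphism $u\colon\Omega\to A_r$ furnished by Lemma~\ref{u}, where the operator $U^\Omega_{\rho_1}$ becomes a transparent rotation and the eigenspaces can be read off from Laurent monomials. Since $u$ maps $\Omega$ conformally and bijectively onto $A_r$, the standard change-of-variables formula for Bergman spaces shows that the weighted composition operator
\[
V\colon L_a^2(A_r)\to L_a^2(\Omega),\qquad (Vs)(z)=s(u(z))\,u'(z),
\]
is unitary, with inverse $(V^{-1}f)(w)=f(u^{-1}(w))\,(u^{-1})'(w)$. On the annulus the Laurent monomials $\{w^k\}_{k\in\mathbb{Z}}$ form a complete orthogonal system for $L_a^2(A_r)$, the negative powers being admitted precisely because $A_r$ is an annulus rather than a disk; observe that $V(w^k)=u^k u'$, the very functions appearing in the statement.

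Next I would conjugate $U^\Omega_{\rho_1}$ through $V$. The key identity is that, since $\rho_1=u^{-1}(\zeta u)$ gives $u\circ\rho_1=\zeta u$ on $\Omega$, differentiating yields
\[
u'(\rho_1(z))\,\rho_1'(z)=\zeta\,u'(z),\qquad z\in\Omega.
\]
Using this, a direct computation for $f=Vs$ gives
\[
(U^\Omega_{\rho_1}f)(z)=f(\rho_1(z))\rho_1'(z)=s(\zeta u(z))\,u'(\rho_1(z))\rho_1'(z)=\zeta\,s(\zeta u(z))\,u'(z),
\]
which is exactly $\bigl(V(Ws)\bigr)(z)$ for the weighted composition operator $W$ on $L_a^2(A_r)$ defined by $(Ws)(w)=\zeta\,s(\zeta w)$. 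Hence $V^{-1}U^\Omega_{\rho_1}V=W$, so $V$ intertwines $W$ with $U^\Omega_{\rho_1}$.

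It then remains to diagonalize $W$. On the Laurent basis one computes $W(w^k)=\zeta(\zeta w)^k=\zeta^{k+1}w^k$, so each $w^k$ is an eigenvector of $W$ with eigenvalue $\zeta^{k+1}$, and the $\zeta^i$-eigenspace of $W$ is $\overline{span}\{w^k: k+1\equiv i\ (\mathrm{mod}\ n)\}$. Since $V$ is unitary and intertwines $W$ with $U^\Omega_{\rho_1}$, it carries this eigenspace onto the $\zeta^i$-eigenspace $\mathcal{N}^\Omega_i$ of $U^\Omega_{\rho_1}$; as $V(w^k)=u^k u'$, we conclude that $\mathcal{N}^\Omega_i=\overline{span}\{u^k u': k+1\equiv i\ (\mathrm{mod}\ n)\}$, as claimed.

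The two points requiring care—and hence the main, if modest, obstacles—are the unitarity of $V$ together with the completeness of $\{w^k\}_{k\in\mathbb{Z}}$ in $L_a^2(A_r)$, which is what legitimizes using all integer powers $k\in\mathbb{Z}$, and the bookkeeping in the chain-rule identity $u'(\rho_1(z))\rho_1'(z)=\zeta u'(z)$, which is the computation that collapses $U^\Omega_{\rho_1}$ to a clean rotation on the annulus. Everything else reduces to a routine diagonalization.
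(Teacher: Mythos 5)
Your proof is correct and follows essentially the same route as the paper: the paper also uses the unitary pull-back $C_u f=(f\circ u)\,u'$ (your $V$), the completeness of the Laurent monomials $\{w^k\}_{k\in\mathbb{Z}}$ in $L_a^2(A_r)$, and the identity $u\circ\rho_1=\zeta u$. The only difference is presentational: the paper verifies directly on $\Omega$ that the $u^k u'$ are eigenvectors and then invokes completeness, whereas you conjugate $U^\Omega_{\rho_1}$ to the rotation $W$ on the annulus and diagonalize there.
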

\begin{proof}
Since $u\circ \rho_1=\zeta u$ on $\Omega$, it is easy to check that $$U_{\rho_1} (u^k u')=\zeta^i u^k u',  \text{ for } k+1\equiv i\, mod\, n.$$ That is, $\mathcal{N}_i^\Omega$ is contained in the eigenspace of $U_{\rho_1}$ for the eigenvalue  $\zeta^i$. It remains to show that $\oplus_i \mathcal{N}^\Omega_i=L_a^2(\Omega)$. In fact, we will prove that
  $\{u^k u': k\in\mathbb{Z}\} $ is a complete  orthogonal basis for $L_a^2(\Omega)$.

Define the pull-back operator $C_u:   L_a^2(A_r)\to L_a^2(\Omega) $ by
$$C_u f=(f\circ u) \,\,u'.$$
Since $u: \Omega\to A_r$ is invertible, $C_u$ is unitary. Noticing that $\{z^k:k\in\mathbb{Z}\}$ is a complete  orthogonal basis for $L_a^2(A_r)$, one sees that
$\{u^k u'=C_u(z^k): k\in\mathbb{Z}\} $ is a complete  orthogonal basis for $L_a^2(\Omega)$, as desired.
\end{proof}

 Recall  that  for   the partition $\{G_1,\cdots,G_q\}$      of local inverses for $\phi^{-1}\circ \phi$,     we say   $j_1\sim j_2 $ in the dual partition for two integers $0\leq j_1,j_2\leq n-1,$ if
  $$\sum_{\rho_k\in G_i}\zeta^{k\,j_1\,}=\sum_{\rho_k\in G_i}\zeta^{k\,j_2 }  \text{ for any  } 1\leq i\leq q.$$
By this  equivalence relation,   the set $\{0,1,\cdots,n-1\}$ is partitioned into equivalence classes
  $\{G'_1,\cdots,G'_p\}.$

 For each  $G'_j$ in the dual partition, let  $\mathcal{L}^\Omega_j=\oplus_{i\in G'_j} \mathcal{N}^\Omega_i $; that is,
 $$\mathcal{L}^\Omega_j=\overline{span} \{u^i u': i\in\mathbb{Z}, i+1(mod\,n) \in G'_j \}. $$
Then $\oplus_{j=1}^p \mathcal{L}^\Omega_j=L_a^2(\Omega)$. From formula (\ref{ockj3})
 \begin{equation}
 \label{E2} \mathcal{E}^\Omega_k(f) =\sum_{1\leq j\leq p} c'_{k\,j}  P_{\mathcal{L}^\Omega_j} (f),\, f\in L_{a}^2(\Omega),
 \end{equation}
 where $c'_{k\,j}=\sum_{\rho_i\in G_k} \zeta^{i\,l}$ for any $l\in G'_j$. By  the equivalent condition for the dual partition,   $c'_{k\,j_1}=c'_{k\,j_2}$ for each $k$ if and only if $ {j_1}= {j_2} $. Comparing formulas (\ref{ors}) and (\ref{E2})   yields the following result.
 \begin{prop}
 For each $\mathcal{M}^\Omega_j$, there exists $  1\leq k\leq p$ such that $\mathcal{M}^\Omega_j=\mathcal{L}^\Omega_{ k} \cap L_{a,p}^2(\Omega).$ 
 \end{prop}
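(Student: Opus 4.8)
The plan is to read both formula (\ref{ors}) and formula (\ref{E2}) as joint-eigenspace decompositions of one and the same commuting family of normal operators $\{\mathcal{E}^\Omega_1,\dots,\mathcal{E}^\Omega_q\}$: the first relative to the invariant (indeed reducing) subspace $L_{a,p}^2(\Omega)$, the second relative to the full space $L_a^2(\Omega)$. Once this is in place, proving the proposition amounts to matching the two sets of joint eigenvalues.

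First I would record the joint-eigenspace structure on $L_a^2(\Omega)$. Formula (\ref{E2}) says that each $\mathcal{E}^\Omega_m$ acts on $\mathcal{L}^\Omega_l$ as the scalar $c'_{ml}$, so $\mathcal{L}^\Omega_l$ is precisely the joint eigenspace of the family for the tuple $(c'_{1l},\dots,c'_{ql})$, and $L_a^2(\Omega)=\oplus_{l=1}^p \mathcal{L}^\Omega_l$ is the associated orthogonal decomposition. The decisive point, supplied by the dual-partition criterion, is that these tuples are pairwise distinct: $c'_{m\,l_1}=c'_{m\,l_2}$ for all $m$ forces $l_1=l_2$. Hence every nonzero joint eigenvector of $\{\mathcal{E}^\Omega_m\}$ in $L_a^2(\Omega)$ lies entirely inside a single $\mathcal{L}^\Omega_l$, namely the one whose tuple equals the eigenvalue tuple of the vector.

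Next I would feed in a fixed $\mathcal{M}^\Omega_j$. Pick $0\neq f\in\mathcal{M}^\Omega_j\subseteq L_{a,p}^2(\Omega)\subseteq L_a^2(\Omega)$. By (\ref{ors}) the vector $f$ is a joint eigenvector with tuple $(c_{1j},\dots,c_{qj})$; expanding $f=\sum_l f_l$ along $\oplus_l\mathcal{L}^\Omega_l$ and comparing $\mathcal{E}^\Omega_m f=c_{mj}f$ with $\mathcal{E}^\Omega_m f=\sum_l c'_{ml}f_l$ term by term (using orthogonality) forces $c'_{ml}=c_{mj}$ for every $m$ whenever $f_l\neq 0$. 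By distinctness of the tuples there is exactly one such index $k=k(j)$, and it is pinned down by the eigenvalue tuple $(c_{mj})_m$ of $\mathcal{M}^\Omega_j$ alone, independently of $f$; thus $f\in\mathcal{L}^\Omega_k$ and $(c'_{1k},\dots,c'_{qk})=(c_{1j},\dots,c_{qj})$. As $f$ was arbitrary, $\mathcal{M}^\Omega_j\subseteq\mathcal{L}^\Omega_k$, whence $\mathcal{M}^\Omega_j\subseteq\mathcal{L}^\Omega_k\cap L_{a,p}^2(\Omega)$. For the reverse inclusion, take $f\in\mathcal{L}^\Omega_k\cap L_{a,p}^2(\Omega)$: membership in $\mathcal{L}^\Omega_k$ gives $\mathcal{E}^\Omega_m f=c'_{mk}f=c_{mj}f$ for all $m$ via (\ref{E2}), and membership in $L_{a,p}^2(\Omega)$ then places $f$ in $\mathcal{M}^\Omega_j$ by (\ref{ors}). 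Combining the two inclusions yields $\mathcal{M}^\Omega_j=\mathcal{L}^\Omega_k\cap L_{a,p}^2(\Omega)$.

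There is essentially no analytic difficulty remaining, since the normality and the reducing-subspace structure of the $\{\mathcal{E}^\Omega_m\}$ were already established before the preceding Proposition; the whole argument is a bookkeeping match of joint eigenvalues. The one step I would state most carefully is the distinctness of the tuples $(c'_{1l},\dots,c'_{ql})$, which is exactly the content of the dual-partition equivalence and is what makes the correspondence $j\mapsto k(j)$ well defined. I would also note in passing that the invertibility of $[c_{kj}]$ makes $j\mapsto k(j)$ injective, which is the germ of the inequality $q\le p$ exploited later, although it is not needed for the present statement.
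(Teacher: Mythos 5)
Your proof is correct and takes essentially the same route as the paper: both arguments expand a nonzero $f\in\mathcal{M}^\Omega_j$ along the orthogonal decomposition $\oplus_l\mathcal{L}^\Omega_l$, compare formulas (\ref{ors}) and (\ref{E2}) componentwise, and invoke the distinctness of the dual-partition tuples $(c'_{1l},\dots,c'_{ql})$ to isolate a single index $k$ with $c'_{mk}=c_{mj}$ for all $m$. Your observation that $k$ is pinned down by the eigenvalue tuple $(c_{mj})_m$ alone (hence automatically independent of $f$) neatly absorbs the paper's separate step, which establishes independence by applying the uniqueness argument to $f_1+f_2$; this is a minor streamlining, not a different method.
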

 \begin{proof}
  For each $0\neq f\in \mathcal{M}^\Omega_j\subseteq\oplus_k \mathcal{L}_k^\Omega=L_a^2(\Omega), $  there exists at least one $d_f$ such that $1\leq d_f\leq p$  and the projection of $f$ on  $ \mathcal{L}^\Omega_{d_f} $ is nonzero. We claim that $d_f$ is unique.
   Indeed, suppose  for    $  k_1\neq k_2 $,  $P_{\mathcal{L}^\Omega_{k_1}}(f)$ and $ P_{\mathcal{L}^\Omega_{k_2}}(f) $ are nonzero.
 By formula (\ref{ors}), one sees  for each $1\leq i\leq n$ that,
 $$[P_{\mathcal{L}_{k_1}}+P_{\mathcal{L}_{k_2}}]\mathcal{E}^\Omega_i(f)=c_{ ij} P_{\mathcal{L}_{k_1}} (f)+c_{ ij} P_{\mathcal{L}_{k_2}} (f).$$
  Moreover, by formula (\ref{E2}),
  $$[P_{\mathcal{L}_{k_1}}+P_{\mathcal{L}_{k_2}}]\mathcal{E}^\Omega_i(f)=c'_{i\,k_1} P_{\mathcal{L}_{k_1}} (f)+ c'_{i\,k_2} P_{\mathcal{L}_{k_2}} (f).$$
  This implies that $c_{ ij}=c'_{i\,k_1 }=c'_{i\,k_2 }$ for each $i$. This leads to an contradiction since $k_1\neq k_2$. Therefore,  there exists only one integer $d_f$ such that $P_{\mathcal{L}_{d_f}^\Omega}(f)\neq   0 .$

 We now prove that $d_f$ is independent of $f$. Otherwise, there exist  $k_1\neq k_2$ and $f_1,f_2\in \mathcal{M}_j$ such that both $P_{\mathcal{L}^\Omega_{k_1}}(f_1)$ and $P_{\mathcal{L}^\Omega_{k_2}}(f_2) $ are nonzero. By the uniqueness proved in the preceding paragraph, we have that $P_{\mathcal{L}^\Omega_{k_1}}(f_2)= P_{\mathcal{L}^\Omega_{k_2}}(f_1)=0. $ However,  this means that
 both  $ P_{\mathcal{L}^\Omega_{k_1}}(f_1+f_2)$ and $ P_{\mathcal{L}^\Omega_{k_2}}(f_2+f_1)$ are nonzero, which contradicts   the uniqueness of $d_{f_1+f_2}$.

   Therefore, there exists only one integer $k$ such that  $P_{\mathcal{L}^\Omega_k} \mathcal{M}^\Omega_j\neq \{0\}$. Moreover,  we have that $c_{ij}=c'_{ik}$ for each $i$.
        Combining this fact with formulas (\ref{ors}) and (\ref{E2}),
    one sees  that $$\mathcal{M}^\Omega_j=  \mathcal{L}^\Omega_k\cap L_{a,p}^2(\Omega)=\{f\in L_{a,p}^2(\mathbb{D}): \mathcal{E}^\Omega_i f=c_{ij } f, \, 1\leq i \leq  q\},$$ completing the proof.
 \end{proof}

 In what follows, we will prove the converse of the above proposition. We begin with some lemmas.

 \begin{lem}
 Let $f$ be a function  holomorphic    on a neighborhood  of  $\overline{A_r}$. Then for any $k\in\mathbb{Z}$, $f\bot z^k$ in $L^2_a(A_r)$ if and only if
 $ \int_{z\in\mathbb{T}} f(z) \overline{z^k} dm(z)=0. $
 \end{lem}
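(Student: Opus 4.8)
The plan is to reduce both sides of the claimed equivalence to the vanishing of a single Laurent coefficient of $f$. Since $f$ is holomorphic on a neighborhood of the compact set $\overline{A_r}$, it admits a Laurent expansion $f(z)=\sum_{m\in\mathbb{Z}}a_m z^m$ converging uniformly on $\overline{A_r}$, and in particular uniformly on the circle $\mathbb{T}$. I would then evaluate both the Bergman inner product $\langle f,z^k\rangle_{L^2_a(A_r)}$ and the boundary integral $\int_{\mathbb{T}}f\,\overline{z^k}\,dm$ by integrating this series term by term, the interchange of sum and integral being legitimized by the uniform convergence.

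For the Bergman side, I would first record that the monomials $\{z^m\}_{m\in\mathbb{Z}}$ are pairwise orthogonal in $L^2_a(A_r)$: writing $z=se^{i\theta}$, the angular factor $\int_0^{2\pi}e^{i(m-m')\theta}\,d\theta$ vanishes unless $m=m'$. Consequently $\langle f,z^k\rangle_{L^2_a(A_r)}=a_k\,\|z^k\|^2_{L^2_a(A_r)}$, so this inner product vanishes precisely when $a_k\|z^k\|^2=0$. The point to verify is that $\|z^k\|^2_{L^2_a(A_r)}>0$ for every $k\in\mathbb{Z}$: its radial part is $2\pi\int_r^1 s^{2k+1}\,ds=\pi(1-r^{2k+2})/(k+1)$, a strictly positive number for all $k\neq -1$ (the sign of $1-r^{2k+2}$ matches that of $k+1$ since $0<r<1$), while in the exceptional case $k=-1$ the integral equals $-2\pi\log r>0$. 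Hence $\langle f,z^k\rangle_{L^2_a(A_r)}=0$ if and only if $a_k=0$.

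For the boundary side, parametrizing $\mathbb{T}$ by $z=e^{i\theta}$ and again integrating term by term, the same orthogonality of exponentials yields $\int_{\mathbb{T}}f(z)\overline{z^k}\,dm(z)=a_k\int_{\mathbb{T}}|z^k|^2\,dm=c\,a_k$ for a positive constant $c$ (the total mass of $dm$ on $\mathbb{T}$, which is immaterial under any normalization since $|z^k|=1$ there). Thus the boundary integral vanishes if and only if $a_k=0$ as well. Combining the two computations, both conditions are equivalent to $a_k=0$, which proves the lemma.

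The only genuinely delicate point is the one flagged above, namely that no monomial is a null vector in $L^2_a(A_r)$; this forces one to treat $k=-1$ separately, where the naive antiderivative $s^{2k+2}/(2k+2)$ degenerates and must be replaced by a logarithm. Everything else is a routine application of uniform convergence on $\overline{A_r}$ to justify term-by-term integration.
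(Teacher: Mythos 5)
Your proof is correct and follows essentially the same route as the paper: expand $f$ in its Laurent series on $A_r$, use orthogonality of the monomials $\{z^m\}_{m\in\mathbb{Z}}$ in both $L^2_a(A_r)$ and $L^2(\mathbb{T})$, and observe that each inner product equals $a_k$ times a nonzero constant, so both vanish exactly when $a_k=0$. The only difference is that you spell out the "direct computation" the paper leaves implicit, including the positivity of $\|z^k\|^2_{L^2_a(A_r)}$ with the logarithmic case $k=-1$, which is a welcome but not essential addition.
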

 \begin{proof}
 Let $a_k$ be the coefficient for $z^k$ in the Laurent series expansion of $f$ on $A_r$. Observe that $\{z^k\}^{+\infty}_{k=-\infty}$ is a complete orthogonal basis for both of $L_a^2(A_r)$ and $L^2(\mathbb{T})$.   A direct computation shows that  $\langle f, z^k\rangle_{L^2_a(A_r)}= a_k  \|z^k\|_{L^2_a(A_r)}$
  and
  $\langle f, z^k\rangle_{L^2{(\mathbb{T})}}= a_k  \|z^k\|_{L^2{(\mathbb{T})}},$ which leads to the desired result.
 \end{proof}
 We also need the following    transformation formula. 
 \begin{lem}
 Let $s:\mathbb{T}\to\mathbb{T}$ be an invertible differentiable map. Then there exists a constant $ \epsilon_s=1$ or $-1$, such that for any $f\in C(\mathbb{T}) $
 $$\int_ \mathbb{T}f(\theta) dm(\theta)= \epsilon_s\int_ \mathbb{T}  f( s(\theta)) \frac{  s'(\theta)}{i\,s(\theta)} dm(\theta).$$
 If, in addition, $s$ is   holomorphic  on a neighborhood  of $\mathbb{T}$, then
 $$\int_ \mathbb{T}f(z) dm(z)= \epsilon_s\int_ \mathbb{T}  f( s(z)) \frac{ z\, s'(z)}{s(z)} dm(z).$$
  \end{lem}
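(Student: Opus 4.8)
The plan is to prove the real-variable statement first and then specialize to the holomorphic case by the substitution $z = e^{i\theta}$. First I would make precise the normalization of the measure: since $dm$ on $\mathbb{T}$ should be arclength (or its normalized version), I write $z = e^{i\theta}$ with $\theta$ ranging over $[0,2\pi)$, so that $dm(\theta) = d\theta$ (up to the chosen constant, which cancels on both sides). The map $s:\mathbb{T}\to\mathbb{T}$ being an invertible differentiable self-map of the circle, I would lift it to a map on angles, writing $s(e^{i\theta}) = e^{i\sigma(\theta)}$ for a differentiable function $\sigma$; invertibility of $s$ forces $\sigma$ to be a strictly monotone bijection of the circle, hence either strictly increasing or strictly decreasing, and this monotonicity sign is exactly what will produce the constant $\epsilon_s = \pm 1$.

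The core of the argument is the ordinary change-of-variables formula for integrals on the circle. Writing $\psi = s(\theta)$ as the new angular variable, I would substitute $\varphi = \sigma(\theta)$, so that $d\varphi = \sigma'(\theta)\,d\theta$; because $\sigma$ is a monotone bijection of a full period, the oriented integral picks up the sign of $\sigma'$, which is constant. Thus
\begin{equation}
\int_{\mathbb{T}} f(\theta)\,dm(\theta)
= \epsilon_s \int_{\mathbb{T}} f(\sigma(\theta))\,\sigma'(\theta)\,dm(\theta),
\end{equation}
where $\epsilon_s = \mathrm{sgn}\,\sigma' = \pm 1$. The remaining task is purely bookkeeping: I must rewrite $\sigma'(\theta)$ in terms of $s$ itself, so as to match the stated integrand $\frac{s'(\theta)}{i\,s(\theta)}$. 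Differentiating $s(e^{i\theta}) = e^{i\sigma(\theta)}$ gives a relation between the derivative of $s$ and $\sigma'$; in the notation where $s'$ denotes the angular derivative of $s$ along $\mathbb{T}$, the identity $s = e^{i\sigma}$ yields $s' = i\sigma' s$, hence $\sigma' = \tfrac{s'}{i\,s}$, which substitutes directly into the display and completes the first formula.

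For the holomorphic case, I would assume $s$ extends holomorphically to a neighborhood of $\mathbb{T}$ and reinterpret the derivative $s'$ as the complex derivative, writing $z=e^{i\theta}$ and using $\frac{d}{d\theta} = iz\frac{d}{dz}$ so that the angular derivative of $s$ equals $iz\,s'(z)$ with $s'$ now the genuine complex derivative. Feeding this into the first formula and cancelling, the factor $\frac{s'(\theta)}{i\,s(\theta)}$ becomes $\frac{iz\,s'(z)}{i\,s(z)} = \frac{z\,s'(z)}{s(z)}$, which is precisely the claimed integrand in the second displayed formula. The main obstacle, I expect, is not any hard analysis but rather pinning down conventions unambiguously: one must be careful to distinguish the angular (real) derivative of $s$ from the complex derivative $s'(z)$, and to track how the orientation of the parametrization interacts with whether $s$ preserves or reverses orientation on $\mathbb{T}$, since it is exactly this sign ambiguity that is packaged into $\epsilon_s$. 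Once the variable $z=e^{i\theta}$ substitution is handled consistently across both statements, the two formulas follow from the single change-of-variables computation.
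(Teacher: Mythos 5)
Your proposal is correct and follows essentially the same route as the paper: lift $s$ to a monotone angular function (the paper writes $s(\theta)=e^{i\widetilde{s}(\theta)}$ after normalizing $s(1)=1$), apply the ordinary change-of-variables formula so that monotonicity packages the orientation sign into $\epsilon_s$, differentiate the lift to get $\widetilde{s}\,'=\frac{s'}{i\,s}$, and dispose of the holomorphic case via the chain-rule identity $s'(\theta)=i\,z\,s'(z)$. The only cosmetic differences are that the paper performs the reduction of the second formula to the first at the outset rather than at the end, and it states the change of variables with $|\widetilde{s}\,'|$ instead of $\epsilon_s\widetilde{s}\,'$, which is equivalent.
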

 \begin{proof}
It is sufficient to verify only the first  equation. Indeed,
 the latter equation follows from the former equation by the fact that $$s'(\theta)=s'(z) \frac{dz}{d\theta} =i\,e^{i\theta} s'(z)=i\,z\,s'(z),\,z\in \mathbb{T}.$$
 Without loss of generality, we can suppose  that $s(1)=1$. Then there exists   $\widetilde{s}:(0,2\pi) \to (0,2\pi)$ such that $s(\theta)=e^{i \widetilde{s}(\theta)}$. An elementary calculus argument  shows that
 $$\int_ \mathbb{T}f(\theta) dm(\theta)=  \int_ \mathbb{T}  f( s(\theta)) | \widetilde{s}\,' (\theta) | dm(\theta) .$$
 Since $s$ is invertible on $\mathbb{T}$, one has that $\widetilde{s}:(0,2\pi) \to (0,2\pi)$  is a monotonic function. Therefore, we can choose a constant $ \epsilon_s=1$ or $-1$  such that $|\widetilde{s}\,'|=\epsilon_s \widetilde{s}\,' $. Moreover, differentiating the equation $s(\theta)=e^{i \widetilde{s}(\theta)}$, one sees that $ s'(\theta)=i\,e^{i \widetilde{s}(\theta)} \,\widetilde{s}\,'(\theta)=i\,s(\theta) \,\widetilde{s}\,'(\theta)$. This implies  that $| \widetilde{s}\,' (\theta) |=\frac{  \epsilon_s s'(\theta)}{i\,s(\theta)}$, completing the proof.
 \end{proof}
 \begin{lem}
 For  any  integer $ k \geq 0$,  there exists some integer $i\geq 0$ such that $\langle z^i, u^k u'\rangle_{L^2(\Omega)}\neq 0$.  Therefore, $P_{ {L}_{a,p}^2(\Omega)} \mathcal{N}^\Omega_k\neq \{0\}$ for all  $0\leq k\leq n-1$.
 \end{lem}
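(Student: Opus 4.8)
The plan is to argue by contradiction: assuming that $u^k u'\perp z^i$ in $L^2(\Omega)$ for \emph{every} $i\geq 0$, I will show that $u^{-k-1}$ would extend to a function holomorphic on a disk of radius larger than one, which is impossible because its $n$-th power equals $\phi^{-(k+1)}$ and the latter has genuine poles at the zeros of $\phi$. The role of the hypothesis $k\geq 0$ is precisely to guarantee $k+1\geq 1$, so that $\phi^{-(k+1)}$ is not holomorphic at a zero of $\phi$.

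First I would transport the pairing from $\Omega$ to $A_r$ by means of the unitary pull-back $C_u$ constructed above, for which $u^k u'=C_u(z^k)$ and $C_u^{-1}=C_v$ with $v=u^{-1}$. This gives
$$\langle z^i, u^k u'\rangle_{L^2(\Omega)}=\langle C_v z^i, z^k\rangle_{L^2(A_r)}=\langle v^i v', z^k\rangle_{L^2(A_r)}.$$
Since $u:u^{-1}(\overline{A_r})\to\overline{A_r}$ is invertible by Lemma 2.1, its inverse $v$ is holomorphic on a neighborhood of $\overline{A_r}$, so $v^iv'$ is holomorphic there and the boundary-value criterion proved above applies: the pairing vanishes if and only if $\int_{\mathbb{T}} v(z)^i v'(z)\,z^{-k}\,dm(z)=0$, using $\overline{z^k}=z^{-k}$ on $\mathbb{T}$.

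Next I would feed this into the transformation formula established above, taken with $s=u$; note that $u$ is a holomorphic, invertible self-map of $\mathbb{T}$ (injectivity comes from Lemma 2.1, and $u(\mathbb{T})=\mathbb{T}$ gives surjectivity, while $|u(z)|<1$ for $z\in\mathbb{D}$ shows $u^{-1}(\mathbb{T})=\mathbb{T}$). Writing $f(z)=v(z)^iv'(z)z^{-k}$ and using $v(u(z))=z$ together with $v'(u(z))=1/u'(z)$, the integrand $f(u(z))\,\frac{z\,u'(z)}{u(z)}$ collapses to $z^{\,i+1}u(z)^{-k-1}$, whence
$$\langle z^i, u^k u'\rangle_{L^2(\Omega)}=0\iff \int_{\mathbb{T}} z^{\,i+1}u(z)^{-k-1}\,dm(z)=0,$$
the factor $\epsilon_u=\pm1$ being harmless. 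Thus the contradiction hypothesis says exactly that the Laurent coefficient of $z^{-(i+1)}$ in $g:=u^{-k-1}$ vanishes for every $i\geq 0$. Because $u$ is holomorphic near $\mathbb{T}$ (Lemma 2.1) and zero-free there (as $|u|=1$ on $\mathbb{T}$), the function $g$ has a genuine Laurent expansion on an annulus $\{r_1<|z|<R\}$ with $r_1<1<R$, and the vanishing of all strictly negative coefficients forces $g$ to extend holomorphically to the disk $\{|z|<R\}\supset\overline{\mathbb{D}}$.

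The contradiction is then immediate. On the annulus one has $g^n=u^{-n(k+1)}=\phi^{-(k+1)}$, so the holomorphic extension $\tilde g$ satisfies $\tilde g^{\,n}\phi^{k+1}=1$ there; as both $\tilde g^{\,n}$ and $\phi^{k+1}$ are holomorphic on $\mathbb{D}$ (recall $\phi$ is holomorphic across $\overline{\mathbb{D}}$), the identity theorem propagates this relation to all of $\mathbb{D}$. Evaluating at any zero $a_j$ of $\phi$ yields $\tilde g(a_j)^n\cdot 0=1$, which is absurd. This proves the first assertion. For the final statement, each $\mathcal{N}^\Omega_k$ with $0\leq k\leq n-1$ contains the vector $u^j u'$ for some integer $j\geq 0$ with $j\equiv k-1\ (\mathrm{mod}\,n)$, and the first assertion shows $u^j u'$ has nonzero projection onto $L_{a,p}^2(\Omega)$; hence $P_{L_{a,p}^2(\Omega)}\mathcal{N}^\Omega_k\neq\{0\}$. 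The only delicate points are verifying the hypotheses of the two boundary lemmas (holomorphy of $v$ near $\overline{A_r}$ and invertibility of $u$ on $\mathbb{T}$); once these are in place, the computation with the transformation formula is the crux, reducing everything to the impossibility of a holomorphic $n$-th root of $\phi^{-(k+1)}$ on $\mathbb{D}$.
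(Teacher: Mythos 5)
Your proposal is correct, and up to the final step it is the paper's own argument: the same transport of the pairing by the unitary $C_u$ from $\Omega$ to $A_r$, the same use of Lemma 3.6 to replace the Bergman pairing on $A_r$ by a boundary integral over $\mathbb{T}$, and the same change of variables via Lemma 3.7 with $s=u$, reducing the contradiction hypothesis to the vanishing of all moments $\int_{\mathbb{T}} z^{i+1}u(z)^{-(k+1)}\,dm(z)$, $i\geq 0$. The two arguments part ways only at the finish. You read these vanishing moments as the vanishing of all strictly negative Laurent coefficients of $g=u^{-(k+1)}$ on an annulus around $\mathbb{T}$, extend $g$ holomorphically across $\overline{\mathbb{D}}$, and reach absurdity by propagating $\tilde g^{\,n}\phi^{k+1}=1$ into $\mathbb{D}$ and evaluating at a zero of $\phi$. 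The paper instead reads the same identity (using $u^{-1}=\overline{u}$ on $\mathbb{T}$) as saying $u^{k+1}$ lies in the conjugate Hardy space $\overline{H^2(\mathbb{T})}$, deduces $\phi^{k+1}=u^{n(k+1)}\in\overline{H^2(\mathbb{T})}$, and concludes that the holomorphic function $\phi^{k+1}$ must be constant, contradicting the nontriviality of $\phi$. These finishes are equivalent --- conjugation versus reciprocal on the circle --- but yours is marginally more self-contained (Laurent coefficients plus the identity theorem, with no appeal to $H^2\cap\overline{H^2}=\mathbb{C}$), at the price of justifying the holomorphic extension, while the paper's is shorter once the standard Hardy-space fact is granted. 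Your handling of the second assertion (choosing $j\geq 0$ with $j+1\equiv k \pmod n$ and noting that the polynomials $z^i$ lie in $L^2_{a,p}(\Omega)$) also matches the paper's intent, so the lemma stands as you proved it.
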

 \begin{proof}
 We prove the statement by contradiction. Suppose that  for some $k\geq 0$,   $$ \langle z^i, u^k u'  \rangle_{L^2(\Omega)} =0,\,\, \forall i\geq 0 .$$
Since the operator $C_u:L^2(A_r) \to L^2(\Omega)$, which  appears  in Lemma 3.4, is   unitary, the above equation is equivalent to
$$ \langle (u^{-1})^i (u^{-1})', z^k  \rangle_{L^2(A_r)} =0,\,\, \forall i\geq 0 .$$
  Using Lemma 3.6, it follows that for each integer $i\geq 0$
$$ \langle (u^{-1})^i (u^{-1})', z^k  \rangle_{L^2(\mathbb{T})} =\int_{\mathbb{T}} (u^{-1})^i (u^{-1})' \overline{z^k} dm(z)=0 .$$
By Lemma 3.7, Lemma 2.1 and the fact that $|u(z)|=1$ for $z\in\mathbb{T}$, we have for each integer $i\geq 0$, that
$$0=\int_{\mathbb{T}} z^i \, (u^{-1})'\circ u(z) \,  \overline{u^k}  \frac{ z\, u'(z)}{u(z)} dm(z)=\int_{\mathbb{T}} z^{i+1} \overline{u^{k+1}}dm(z)=\langle z^{i+1}, u^{k+1}   \rangle_{L^2(\mathbb{T})}.  $$
 This means that   $u^{k+1} \in \overline{H_2(\mathbb{T}})$ and  hence $\phi^{k+1}=u^{n(k+1)} \in \overline{H_2(\mathbb{T}})$.  Noticing that
$\phi^{k+1}$ is   holomorphic  on $\mathbb{D}$, one sees that $\phi^{k+1}$ is a constant. This leads to a  contradiction since $\phi$ is a nontrivial Blaschke product, completing the proof.
 \end{proof}
 Summarizing   the above results, we obtain the converse of Proposition 3.5.
 \begin{prop}
For each $k$, there exists a unique $j$ such that $\mathcal{M}^\Omega_j=\mathcal{L}^\Omega_k \cap L_{a,p}^2(\Omega) $; that is,
 $$L_{a,p}^2(\Omega)=\oplus_{k} [ \mathcal{L}^\Omega_k \cap L_{a,p}^2(\Omega)]. $$
 \end{prop}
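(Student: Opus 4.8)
The plan is to combine the injective correspondence supplied by Proposition 3.5 with the nonvanishing statement of Lemma 3.8, and then to close the argument by comparing two orthogonal direct-sum decompositions of $L^2_{a,p}(\Omega)$. First I would record the structural consequence of Proposition 3.5: it produces a map $\sigma\colon\{1,\dots,q\}\to\{1,\dots,p\}$ determined by $\mathcal{M}^\Omega_j=\mathcal{L}^\Omega_{\sigma(j)}\cap L^2_{a,p}(\Omega)$, and this map is injective since distinct $\mathcal{M}^\Omega_j$ are nonzero and mutually orthogonal. In particular $q\le p$, so it remains only to prove that $\sigma$ is onto.

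Next I would establish the key invariance: each projection $P_{\mathcal{L}^\Omega_k}$ leaves $L^2_{a,p}(\Omega)$ invariant. This follows because $L^2_{a,p}(\Omega)$ reduces every $\mathcal{E}^\Omega_k$, while formula (\ref{E2}) together with the separation property that $c'_{kj_1}=c'_{kj_2}$ for all $k$ if and only if $j_1=j_2$ exhibits each $P_{\mathcal{L}^\Omega_j}$ as a linear combination of the $\mathcal{E}^\Omega_k$; hence $L^2_{a,p}(\Omega)$ reduces each $P_{\mathcal{L}^\Omega_k}$ as well. Writing any $f\in L^2_{a,p}(\Omega)$ as $f=\sum_k P_{\mathcal{L}^\Omega_k}f$ through the decomposition $L^2_a(\Omega)=\oplus_k\mathcal{L}^\Omega_k$, each summand $P_{\mathcal{L}^\Omega_k}f$ then lies in $\mathcal{L}^\Omega_k\cap L^2_{a,p}(\Omega)$, so that $L^2_{a,p}(\Omega)=\oplus_{k=1}^p[\mathcal{L}^\Omega_k\cap L^2_{a,p}(\Omega)]$.

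I would then invoke Lemma 3.8 to see that every summand in this decomposition is nonzero. Fixing $k$ and choosing $i\in G'_k$, Lemma 3.8 provides $f\in\mathcal{N}^\Omega_i$ with $g:=P_{L^2_{a,p}(\Omega)}f\neq 0$. Since $\mathcal{N}^\Omega_i\subseteq\mathcal{L}^\Omega_k$ and $P_{\mathcal{L}^\Omega_k}$ commutes with $P_{L^2_{a,p}(\Omega)}$ by the invariance just proved, one computes $P_{\mathcal{L}^\Omega_k}g=P_{L^2_{a,p}(\Omega)}P_{\mathcal{L}^\Omega_k}f=P_{L^2_{a,p}(\Omega)}f=g$, whence $0\neq g\in\mathcal{L}^\Omega_k\cap L^2_{a,p}(\Omega)$.

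Finally I would close the loop by orthogonality: since $\oplus_j\mathcal{M}^\Omega_j=\oplus_{k\in\mathrm{Im}\,\sigma}[\mathcal{L}^\Omega_k\cap L^2_{a,p}(\Omega)]$ must coincide with the full decomposition $\oplus_{k=1}^p[\mathcal{L}^\Omega_k\cap L^2_{a,p}(\Omega)]$ and every summand of the latter is nonzero, the image of $\sigma$ has to be all of $\{1,\dots,p\}$; were some $k_0$ omitted, the nonzero space $\mathcal{L}^\Omega_{k_0}\cap L^2_{a,p}(\Omega)$ would be orthogonal to every $\mathcal{M}^\Omega_j$, contradicting $\oplus_j\mathcal{M}^\Omega_j=L^2_{a,p}(\Omega)$. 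Thus $\sigma$ is a bijection, $p=q$, and for each $k$ there is a unique $j$ with $\mathcal{M}^\Omega_j=\mathcal{L}^\Omega_k\cap L^2_{a,p}(\Omega)$. I expect the one genuinely delicate point to be the invariance of $L^2_{a,p}(\Omega)$ under the $P_{\mathcal{L}^\Omega_k}$, which is exactly what converts the ``nonzero projection onto $L^2_{a,p}(\Omega)$'' of Lemma 3.8 into a ``nonzero intersection with $L^2_{a,p}(\Omega)$''; everything else is bookkeeping with the two decompositions.
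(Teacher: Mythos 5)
Your overall architecture -- Proposition 3.5 gives an injective correspondence $\sigma$, Lemma 3.8 plus orthogonality forces surjectivity -- matches the paper's, but the middle of your argument contains a step that fails as written. You claim that formula (\ref{E2}) together with the separation property ($c'_{kj_1}=c'_{kj_2}$ for all $k$ if and only if $j_1=j_2$) ``exhibits each $P_{\mathcal{L}^\Omega_j}$ as a linear combination of the $\mathcal{E}^\Omega_k$.'' Pairwise distinctness of the columns of $[c'_{kj}]$ does not give this: the span of the $q$ operators $\mathcal{E}^\Omega_k$ is at most $q$-dimensional, while the $p$ projections $P_{\mathcal{L}^\Omega_j}$ are nonzero, mutually orthogonal, hence linearly independent; so your claim already presupposes $p\le q$, which (combined with your $q\le p$) is exactly the content of the proposition -- the justification is circular. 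A concrete counterexample to the abstract implication: the identity together with a single operator having three distinct eigenvalues separates its three spectral projections, yet spans only a two-dimensional space. The invariance statement you want is nevertheless true, but the correct route is through the generated algebra rather than the linear span: the $\mathcal{E}^\Omega_k$ commute (they are simultaneously diagonalized by the $P_{\mathcal{N}^\Omega_i}$ via (\ref{ockj3})), the class of $\rho_0$ contributes the identity, and since the $p$ points $(c'_{1j},\dots,c'_{qj})\in\mathbb{C}^q$ are pairwise distinct, multivariate Lagrange interpolation writes each $P_{\mathcal{L}^\Omega_j}$ as a polynomial in the $\mathcal{E}^\Omega_k$; a common reducing subspace of the $\mathcal{E}^\Omega_k$ is then invariant under any such polynomial, hence under each $P_{\mathcal{L}^\Omega_j}$.

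Once that step is repaired, the rest of your proof (commutation of $P_{\mathcal{L}^\Omega_k}$ with $P_{L^2_{a,p}(\Omega)}$, nonzero intersections, surjectivity of $\sigma$) goes through. You should know, however, that the paper avoids the invariance detour entirely, and you could too: if some $k_0$ were missing from the image of $\sigma$, then $\mathcal{L}^\Omega_{k_0}\perp\oplus_j\mathcal{L}^\Omega_{\sigma(j)}\supseteq\oplus_j\mathcal{M}^\Omega_j=L^2_{a,p}(\Omega)$, that is, $P_{L^2_{a,p}(\Omega)}\mathcal{L}^\Omega_{k_0}=\{0\}$, which contradicts Lemma 3.8 directly. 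The lemma gives a nonzero \emph{projection} of $\mathcal{N}^\Omega_i\subseteq\mathcal{L}^\Omega_{k_0}$ onto $L^2_{a,p}(\Omega)$, and that is all one needs; you never need the \emph{intersection} $\mathcal{L}^\Omega_{k_0}\cap L^2_{a,p}(\Omega)$ to be nonzero. This shortcut eliminates precisely the point you yourself identified as delicate.
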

 \begin{proof}
  From Proposition 3.5, for each $1\leq j\leq q$, there exists only one $1\leq k_j\leq p$ such that $\mathcal{M}_j^\Omega=   \mathcal{L}^\Omega_{k_j} \cap L_{a,p}^2(\Omega) . $  Hence,
  $$L_{a,p}^2(\Omega)=\oplus_{j} [ \mathcal{L}^\Omega_{k_j} \cap L_{a,p}^2(\Omega) ] .$$
 We claim that the set $\{k_1,\cdots,k_q\}$ is just $\{1,\cdots,p\}$. Indeed, if there exists $k$ such  that $1\leq k\leq p$ but $k$ is not in the set  $\{k_1,\cdots,k_q\}$, 
then  $\mathcal{L}_k^\Omega \bot \oplus_{k_j} \mathcal{L}^\Omega_{k_j}$. This means that $P_{L_{a,p}^2(\Omega)} \mathcal{L}_j^\Omega= \{0\} $, which  leads to a  contradiction, since   $\mathcal{L}_k^\Omega=\oplus_{j\in G'_k} \mathcal{N}^\Omega_j$ and by Lemma 3.8 we have that $P_{ {L}_{a,p}^2(\Omega)} \mathcal{N}^\Omega_j\neq \{0\}$ for each $j$. Therefore, the set $\{k_1,\cdots,k_q\}$ includes all integers between $1$ and $p$. It follows that  $p=q$ and
  $$L_{a,p}^2(\Omega)=\oplus_{k=1}^{q} [ \mathcal{L}^\Omega_{ k} \cap L_{a,p}^2(\Omega) ] ,$$
  as desired.
 \end{proof}
In the proof of Proposition 3.9, one identifies    the following  intrinsic  property of the partition   for a finite Blaschke product.
 \begin{cor}
 The number of  components in  the dual partition is also equal to $q$, the number of connected components of the Riemann surface for $\phi^{-1}\circ \phi$.
 \end{cor}
Combining Lemma 3.2 with Propositions 3.5 and  3.9,  we derive our main result in this section.\\
{\bf{Proof of Theorem 3.1.}}
  Combining Propositions 3.5 and  3.9, after renumbering if necessary, we have  for each $1\leq j\leq q$ that,
   $$\mathcal{M}_j^\Omega=   \mathcal{L}^\Omega_{j } \cap L_{a,p}^2(\Omega) . $$
   Noting that $i_\Omega$ is invertible, one sees that
   $$\mathcal{M}_j=\{f\in L^2_a(\mathbb{D}):\,\, f|_\Omega\in \mathcal{M}_j^\Omega\} =\{f\in L^2_a(\mathbb{D}):\,\, f|_\Omega\in \mathcal{L}^\Omega_{j }\}.$$
   Combining this formula with Lemma 3.2, we have that
   $$\mathcal{M}_j= \{f\in {\mathcal O}(\mathbb{D}): f_\Omega\in \mathcal{L}^\Omega_{j }\},$$
   completing the proof of the theorem.
 $\quad\quad\quad\quad\quad\quad\quad\quad\quad\quad\quad \quad\quad\quad\quad\quad\quad\quad\quad\quad\quad\Box$

\section{Arithmetics of reducing subspaces}
 In \cite{GSZZ,SZZ}, the authors obtained a classification of the structure of the finite Blaschke product $\phi$  in   case $\phi$ has order $3$ or $4$.   In this section we show an arithmetic way towards the classification
of    finite Blaschke products, displaying the details  for the case  of order $8$.

Following \cite{DSZ} we define an equivalence relation among   finite Blaschke products so that   $\phi_1\sim\phi_2$, if there exist
  M$\ddot{o}$bius transformations $\varphi_a(z)=\frac{a-z}{1-\overline{a}z}$ and $\varphi_b(z)=\frac{b-z}{1-\overline{b}z}$ with $a,b\in\mathbb{D}$ such that
$\phi_1=\varphi_a\circ \varphi_2 \circ \varphi_b.$
 A finite Blaschke $\phi$ is called {\it reducible} if there exist two nontrivial finite Blaschke products $\varphi_1,\varphi_2$ such that $\phi\sim \varphi_1\circ \varphi_2$,  and $\phi$ is  {\it irreducible} if $\phi$ is not reducible.

For a finite Blaschke product $\phi$ of order $n$, let $G_1,\cdots,G_q$ be the partition defined by  the family of local inverses $\{\rho_0,\cdots,\rho_n\}$ for $\phi^{-1}\circ \phi$. When no confusion arises,
we write   $i\in G_k$ if $\rho_i\in G_k$, and $G_k=\{i_1,i_2,\cdots,i_j\}$ if $G_k=\{\rho_{i_1},\rho_{i_2},\cdots,\rho_{i_j}\}$. In view of the above notations, $\{G_1,\cdots,G_q\}$ is a partition of the additive group $\mathbb{Z}_n=\{0,1,\cdots,n-1\}$. One can immediately verify that, if $\phi_1\sim\phi_2$, then $\phi_1, \phi_2$ yield identical partitions.

The result in Corollary 3.10 hints that there should exist some internal algebraic and combinatorial  structures for the partitions arising from   finite Blaschke products.  Although we don't understand these  properties  completely, we list a few necessary conditions:

{\emph{$(\alpha_0)$  $\{0\}$ is a singleton in the partition,  since $\rho_0(z)=z$ is holomorphic on $\mathbb{D}$. }}

{\emph{$(\alpha_1)$ For any pair $G_i$ and $G_j$, there exist some $G_{k_1},\cdots,G_{k_m}$ such that
   $$G_i+G_j= G_{k_1}\cup\cdots\cup G_{k_m}  \,\,(\text{counting \,\,multiplicities on both sides}), $$
   where $"+"$ is defined using  the addition of $\mathbb{Z}_n$. (This is a consequence of the fact that the product $\mathcal{E}_i \mathcal{E}_j$ is a linear combination of some $\mathcal{E}_k\,'s $).}}

{\emph{$(\alpha_2)$  By \cite[Lemma 7.4]{DSZ}, for each $G_i=\{i_1,\cdots,i_k\}$, there exists $j$ such that
    $$G_j=G_i^{-1}=\{n-i_1,\cdots,n-i_k\}. $$}}
{\,\,\,\emph{$(\alpha_3)$ By Corollary 3.10, the number of elements in the dual partition is also $q$. } }

 We also need the following generalization of \cite[Lemma 8.3]{DSZ}. Note that the additive structure for elements in $G_k\,'s$ coincides   with compositions near the boundary $\mathbb{T}$.
\begin{lem}
For a finite Blaschke product $\phi$ of  order $n$, $\phi$ is reducible if and only if $G_{k_1}\cup\cdots\cup G_{k_m}$ forms a nontrivial  proper subgroup of $\mathbb{Z}_n$, for some subset
$G_{k_1},\cdots,G_{k_m}$ of the partition arising from $\phi$.
\end{lem}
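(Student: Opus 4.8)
The plan is to prove the equivalence in two directions, using the dictionary between the partition $\{G_1,\dots,G_q\}$ and the group structure of $\mathbb{Z}_n$ that was set up via the intrinsic labeling $\rho_k(z)=u^{-1}(\zeta^k u(z))$ near the boundary. The crucial bridge, already established in Section~2, is that the composition of local inverses matches addition in $\mathbb{Z}_n$: $\rho_k\circ\rho_{k'}=\rho_{k+k' \bmod n}$ on $\Omega$. This lets me translate subgroup conditions on indices into closure conditions on the family of local inverses under composition, which in turn corresponds to factoring $\phi$.

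For the direction ``subgroup implies reducible,'' I would start with a subset $H=G_{k_1}\cup\cdots\cup G_{k_m}$ that forms a nontrivial proper subgroup of $\mathbb{Z}_n$. Because $H$ is closed under the group operation and $\rho_k\circ\rho_{k'}=\rho_{k+k'}$, the corresponding family $\{\rho_i : i\in H\}$ is closed under composition on $\Omega$, hence forms a subgroup of the deck-like group of local inverses. The idea is then to build an intermediate Blaschke product $\psi$ whose fibers are exactly the $H$-orbits: concretely, one wants a finite Blaschke product $\psi$ of order $\lvert H\rvert$ such that $\phi$ factors as $\phi=\eta\circ\psi$ for some finite Blaschke product $\eta$. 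I would produce $\psi$ as a ``symmetric function'' of the $\rho_i$, $i\in H$, realized near the boundary through $u$; since $u$ conjugates the $\rho_i$ to multiplication by the roots of unity $\zeta^i$, the $H$-invariant combination of $u$ descends to an analytic function that extends across $\mathbb{T}$ and, by the index/Fredholm argument in the style of Lemma~\ref{u}, is a genuine Blaschke product of the correct order. The factorization $\phi\sim\eta\circ\psi$ then exhibits $\phi$ as reducible.

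For the converse, ``reducible implies subgroup,'' I would assume $\phi\sim\varphi_1\circ\varphi_2$ and observe that the local inverses of $\phi^{-1}\circ\phi$ that arise from the inner factor $\varphi_2$ form a distinguished subfamily: precisely those $\rho_i$ that fix the fibers of $\varphi_2$, equivalently those satisfying $\varphi_2\circ\rho_i=\varphi_2$ near the boundary. This subfamily is closed under composition and inversion, so the corresponding index set $H\subseteq\mathbb{Z}_n$ is a subgroup; it is nontrivial because $\deg\varphi_2>1$ and proper because $\deg\varphi_1>1$. The remaining point is that $H$ is a union of blocks $G_{k}$: since any two local inverses that are analytic continuations of one another along an arc in $E$ must both preserve or both fail to preserve the fibers of $\varphi_2$, the equivalence relation defining the $G_k$ cannot split $H$, so $H$ is saturated by the partition.

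The main obstacle I expect is the ``subgroup implies reducible'' direction, specifically the construction and boundary regularity of the intermediate factor $\psi$. The subtlety is that the $\rho_i$ are only genuine inverse branches away from the critical curve $\Gamma'$, and the $H$-invariant combination of $u$ that one writes down near $\mathbb{T}$ must be shown to extend to a single-valued analytic function on all of $\mathbb{D}$ and to be inner; this requires invoking the extension of the local inverses across $\mathbb{T}$ (noted after Lemma~\ref{loc}) together with a winding-number/Fredholm-index computation exactly parallel to the one proving $u(\mathbb{T})=\mathbb{T}$ in Lemma~\ref{u}. The converse direction, by contrast, is essentially bookkeeping once the fiber-preserving characterization of the subfamily is in place, so I would allocate most of the technical care to verifying that the candidate $\psi$ really is a finite Blaschke product dividing $\phi$.
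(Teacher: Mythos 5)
Your overall architecture matches the paper's: for ``reducible $\Rightarrow$ subgroup'' you take the local inverses of the inner factor $\varphi_2$, which form a cyclic group under composition near $\mathbb{T}$ and sit inside those of $\phi^{-1}\circ\phi$, and you note (in fact more explicitly than the paper does) that this set is saturated by the partition, since the identity $\varphi_2\circ\rho=\varphi_2$ persists under analytic continuation along arcs in $E$. For ``subgroup $\Rightarrow$ reducible'' you, like the paper, want to build the intermediate factor as a symmetric function of $\{\rho_i : i\in H\}$ and then factor $\phi$ through it.

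However, there is a genuine gap exactly at the point you flag as the main obstacle, and the tools you propose would not close it. The difficulty is to show that the symmetric function (the paper takes $\varphi_2=\prod_{\rho\in G}\rho$) is a single-valued, bounded analytic function on all of $\mathbb{D}$ --- in particular across the cuts $\Gamma'$ and the finitely many branch points inside the disk. Extension of the local inverses across $\mathbb{T}$ and a Fredholm/winding-number computation in the style of Lemma \ref{u} concern only the behavior at the boundary circle; they say nothing about interior single-valuedness. What actually makes the construction work is the hypothesis your sketch never invokes in this direction: that $H$ is a union of equivalence classes $G_{k_i}$. The paper uses this through \cite[Theorem 3.1]{DSZ}: the local inverses in each class $G_{k_i}$ are precisely the roots of a polynomial equation $f_i(w,z)=0$, so $\prod_{\rho\in G_{k_i}}\rho(z)$ is a rational function of $z$; hence $\varphi_2$ is rational, bounded (the local inverses are bounded by $1$ on the dense set $\mathbb{D}\setminus\Gamma'$), and unimodular on $\mathbb{T}$, i.e., a finite Blaschke product of order $\sharp G$. (Equivalently one could argue that saturation makes the symmetric function monodromy-invariant on $E$ and then remove the finitely many branch points by boundedness, but some argument of this kind is indispensable; the subgroup property alone --- which is all you use here --- controls composition near $\mathbb{T}$, not monodromy.) A second, smaller omission: you assert the factorization $\phi\sim\eta\circ\psi$ without constructing $\eta$; the paper defines $\varphi_1(w)=\phi(z)$ for $w=\varphi_2(z)$, using that the generic fibers of $\varphi_2$ are exactly the $H$-orbits (this is where the subgroup structure enters, via $\varphi_2\circ\rho_i=\varphi_2$ for $\rho_i\in G$), and then repeats the Blaschke-product verification for $\varphi_1$.
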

\begin{proof}
Assume  that $\phi$ is reducible.  Without loss of generality,   suppose that $\phi=\varphi_1\circ \varphi_2$ for two nontrivial  finite Blaschke products $\varphi_1,\varphi_2$.
Since the family of local inverses $\varphi_2^{-1}\circ \varphi_2$ is a cyclic group under  compositions near the boundary $\mathbb{T}$, and it is contained in the local inverses  of $\phi^{-1}\circ \phi$,  the set of  the local inverses for $\varphi_2^{-1}\circ \varphi_2$ forms a nontrivial proper subgroup of $\phi^{-1}\circ \phi$.

On the other hand, suppose that $G=G_{k_1}\cup\cdots\cup G_{k_m}$ is a nontrivial proper subgroup of $\mathbb{Z}_n$ for some
$G_{k_1},\cdots,G_{k_m}$. For each $G_{k_i}=\{\rho_{i_1},\cdots,\rho_{i_j}\},$ by \cite[Thereom 3.1]{DSZ} there exists a polynomial $f_i(w,z)$ of degree $ j$ such that
$\{ \rho_{i_1}(z) ,\cdots,\rho_{i_j}(z)\}$ are solutions of $f_i(w,z)=0$. This implies that $\prod_{\rho\in G_{k_i}} \rho(z)=\frac{p_i(z)}{q_i(z)}$ is a quotient of two polynomials $p_i(z),q_i(z)$ of degree at most  $ j$. So, if we define
$$\varphi_2(z)= \prod_{\rho\in G } \rho(z)=\prod_{i=1}^m\prod_{\rho\in G_{k_i}} \rho(z) =\prod_{i=1}^m \frac{p_i(z)}{q_i(z)},$$
then $\varphi_2(z)$ is a rational function of degree at most  $\sharp G$; here $\sharp G $ denotes the number of elements in $G$. It follows that $\varphi_2(z)$ is holomorphic outside a finite point set $S$ of $\mathbb{D}$. Since  each  local inverse  is bounded by $1$ on $\mathbb{D}\backslash \Gamma'$ and   $\mathbb{D}\backslash \Gamma'$ is dense in $\mathbb{D}$, we have that $\varphi_2$ is also bounded on $\mathbb{D}\backslash S$ and  hence it can be analytically  continued across $S$. This means that $\varphi_2$ is a bounded holomorphic   function on $\mathbb{D}$. From a similar  argument involving  local inverses, one sees that $\varphi_2$ is also continuous  on $\mathbb{T}$ and $|\varphi_2(z)|=1$ whenever $z\in\mathbb{T}$. That implies $\varphi_2$ is a finite Blaschke product of order $\sharp G $.

Furthermore, by the group structure of $G$,
  $\varphi_2(\rho_i(z))=\varphi_2(z) $ for each $\rho_i\in G$ if $z$ is close enough to the boundary $\mathbb{T}$. Since $\mathbb{D}\backslash \Gamma'$ is a connected domain including $\Omega$, the equation  still holds whenever  $z\in \mathbb{D}\backslash \Gamma'$.  In other words,    the family of local inverses of $\varphi_2^{-1}\circ \varphi_2$ is just, $G$, a subset in that of $\phi^{-1}\circ \phi$. Consequently, $\phi(z_1)=\phi(z_2)$ if $\varphi_2(z_1)=\varphi_2(z_2)$ and $z_1,z_2$ are
  regular points of $\varphi$. Hence, if  we   define
  $$\varphi_1(w)=\phi(z)  \,\,\text{for}  \, w=\varphi_2(z),$$
  then $\varphi_1$ is well defined outside some finite set of points in $\mathbb{D}$. With a similar argument for $\varphi_2$, one sees that $\varphi_1$ is also a finite Blaschke product, which satisfies $\phi=\varphi_1\circ \varphi_2$, completing the proof of the lemma.
\end{proof}
By the above proof, one sees that if $\phi$ is reducible, then some of the local inverses can be analytically continued across some critical points of $\phi$. But it is not clear that this is a sufficient condition for $\phi$ to be reducible.

Based on the above lemma, we explain the classification for a general  Blaschke product of order four.\\ \\
{\bf{[11, Theorem 2.1.]}} {\emph{Let $\phi$ be a Blaschke product  of  order $4$. One of the following scenarios holds.}}

{\emph{$\mathrm{(1)}$ The partition of $\phi$ is $ \{\{0\},\{1\},\{2\},\{3\}\} $; equivalently,  $\phi\sim z^4$.}}

{\emph{$\mathrm{(2)}$ The partition of $\phi$ is $ \{\{0\}, \{2\},\{1,3\} \}$; equivalently,  $\phi\sim \phi_a^2(z^2)$, where $\phi_a=\frac{a-z}{1-\overline{a}z}$ is a M$\ddot{o}$bius transformation with   $a\neq 0$. }}

{\emph{$\mathrm{(3)}$ The partition of $\phi$ is $\{ \{0\},  \{1,2,3\} \}$; equivalently,   $\phi$ is not  reducible.}}\\
\\
All above possibility occur for some $\phi$ by the result  of Sun, Zheng and Zhong in \cite{SZZ}.

We now classify, using purely arithmetical considerations, the possible structure for a finite Blaschke product of order eight.
\begin{thm}
 Let $\phi$ be a Blaschke product of order $8$. One of the following scenarios holds.

 $\mathrm{(1)}$ The partition of $\phi$ is $ \{\{0\},\{1\},\{2\},\{3\},\{4\},\{5\},\{6\},\{7\} \}$; equivalently,  $\phi\sim z^8$.

 $\mathrm{(2)}$ The partition of $\phi$ is $\{ \{0\}, \{2\},\{4\},\{6\},\{1,5\},\{3,7\} \}$; equivalently,  $\phi\sim \phi_a^2(z^4)$, where $\phi_a=\frac{a-z}{1-\overline{a}z}$ is a M$\ddot{o}$bius transformation with   $a\neq 0$.

  $\mathrm{(3)}$ The partition of $\phi$ is $\{ \{0\},  \{4\},\{1,2,3,5,6,7\}\}$; equivalently,   $\phi\sim \varphi(z^2 )$, where $\varphi$ is an irreducible Blaschke product of order $4$.

 $\mathrm{(4)}$ The partition of $\phi$ is one of $\{ \{0\},  \{4\},\{2,6\},\{1,3,5,7\}\}$, $\{ \{0\},  \{4\},\{2,6\},\{1,3\},\{5,7\}\}$, $\{ \{0\},  \{4\},\{2,6\},\{1,5\},\{3,7\}\}$ or  $\{ \{0\},  \{4\},\{2,6\},\{1,7\},\{3,5\}\}$; equivalently,   $\phi\sim \psi(  \varphi_a^2(z^2)  ) $, where $\psi$ is a  Blaschke product of order $2$ and  $\phi_a=\frac{a-z}{1-\overline{a}z}$ is a M$\ddot{o}$bius transformation with   $a\neq 0$.

 $\mathrm{(5)}$ The partition of $\phi$ is $\{ \{0\}, \{2,4,6\},\{1,3,5,7\}\}$; equivalently,   $\phi\sim  \psi\circ  \varphi  $, where $\psi$ is a  Blaschke product of order $2$ and $\varphi$ is an  irreducible Blaschke product of order $4$.

  $\mathrm{(6)}$ The partition of $\phi$ is $\{ \{0\}, \{1,2,3,4,5,6,7\}\}$; equivalently,   $\phi$ is not reducible.
\end{thm}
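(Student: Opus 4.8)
The plan is to read Theorem 4.2 as a purely combinatorial classification of the admissible partitions of the additive group $\mathbb{Z}_8=\{0,1,\dots,7\}$, cut down by the necessary conditions $(\alpha_0)$--$(\alpha_3)$ and the reducibility criterion of Lemma 4.1, and then to realize each surviving partition by an explicit Blaschke product built from the already established classifications in orders $2$ and $4$. The engine of the argument is thus a finite enumeration, followed by a factorization/recursion step that both names each $\phi$ and discards the combinatorial candidates that no Blaschke product can realize.

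First I would squeeze all possible rigidity out of $(\alpha_1)$ and $(\alpha_2)$. The key observation is that the set $H=\{a\in\mathbb{Z}_8:\{a\}\ \text{is a block}\}$ of singletons is a subgroup: if $\{a\}$ and $\{b\}$ are blocks then by $(\alpha_1)$ the sumset $\{a\}+\{b\}=\{a+b\}$ must be a union of blocks, so, being a single point, the block of $a+b$ is the singleton $\{a+b\}$; closure under inverses is $(\alpha_2)$ and $0\in H$ by $(\alpha_0)$. The same application of $(\alpha_1)$ shows that translation by any $h\in H$ sends each block to a single block, so the entire partition is invariant under $H$. Writing $\langle 2\rangle=\{0,2,4,6\}$ and $\langle 4\rangle=\{0,4\}$ for the proper nontrivial subgroups, this splits the problem into the four cases $H=\mathbb{Z}_8,\ \langle 2\rangle,\ \langle 4\rangle,\ \{0\}$. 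In each case the non-singleton blocks partition a fixed set into blocks of size at least two that are invariant under both negation and translation by $H$; a short enumeration, pruned by testing $(\alpha_1)$ on the remaining sumsets, leaves only: all singletons for $H=\mathbb{Z}_8$ (scenario 1); two candidates for $H=\langle 2\rangle$, namely $\{1,5\},\{3,7\}$ (scenario 2) or the whole block $\{1,3,5,7\}$; five candidates for $H=\langle 4\rangle$ (the set $\{1,2,3,5,6,7\}$ whole, or $\{2,6\}$ together with one of the four symmetric pairings of $\{1,3,5,7\}$); and two candidates for $H=\{0\}$ (the set $\{1,\dots,7\}$ whole, or $\{2,4,6\},\{1,3,5,7\}$), mixtures such as $\{1,4,7\},\{2,3,5,6\}$ being eliminated by $(\alpha_1)$.

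Next I would settle reducibility and realize each candidate. By Lemma 4.1, $\phi$ is reducible exactly when $\langle 4\rangle$ or $\langle 2\rangle$ is a union of blocks; for a reducible candidate Lemma 4.1 furnishes a factorization $\phi\sim\psi\circ\varphi$ through that subgroup, and the partitions induced on the subgroup and on the quotient are the partitions of lower-order Blaschke products, hence must occur in the order-$2$ list (always $\{\{0\},\{1\}\}$) or the order-$4$ list of [11, Theorem 2.1]. Matching these induced partitions simultaneously fixes the normal form claimed in each scenario -- $z^8$, $\phi_a^2(z^4)$, $\varphi(z^2)$ with $\varphi$ irreducible of order $4$, $\psi\circ\phi_a^2(z^2)$, and $\psi\circ\varphi$ -- and produces the explicit $\phi$'s realizing scenarios (2)--(5), including all four variants of scenario (4). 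The irreducible candidates are handled directly: irreducibility forces $H=\{0\}$ and forbids $\langle 2\rangle$ from being a union of blocks, so $\{2,4,6\}$ is not a block; among the $H=\{0\}$ candidates this rules out the scenario-(5) partition and leaves only $\{\{0\},\{1,\dots,7\}\}$, scenario (6).

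The main obstacle is that $(\alpha_0)$--$(\alpha_3)$ are necessary but not sufficient: the partition $\{\{0\},\{2\},\{4\},\{6\},\{1,3,5,7\}\}$ passes all four conditions (its dual partition also has five classes, matching $q=5$), yet it never occurs, and excluding it is exactly where the factorization step does genuine work. Reducing it through $\langle 2\rangle$ forces the inner factor $\varphi\sim z^4$, hence $\phi\sim\psi(z^4)$ with $\psi$ of order $2$; but every order-$2$ Blaschke product is $\sim z^2$, and a direct computation of the partition of $\psi(z^4)$ shows it can only be that of $z^8$ (scenario 1) or of $\phi_a^2(z^4)$ (scenario 2), never the spurious five-block partition. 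The delicate point underlying this -- and the hardest part of the whole proof -- is that determining the partition of a composition $\psi\circ\varphi$ is not a formal operation on the abstract groups: which local inverses of $\phi^{-1}\circ\phi$ merge into one connected component of the Riemann surface depends on the branch-point configuration. Hence each composition arising in scenarios (2)--(5) must be analyzed through its local inverses near $\mathbb{T}$, as set up in Section 2, to confirm that its partition is the asserted one.
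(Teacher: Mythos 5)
Your overall strategy is the same as the paper's: prune the partitions of $\mathbb{Z}_8$ by $(\alpha_0)$--$(\alpha_3)$, then use Lemma 4.1 together with the order-$2$ and order-$4$ classifications to attach a normal form to each survivor. Your organization by the subgroup $H$ of singletons (plus $H$-translation invariance of the partition) is a clean repackaging of the paper's case split by minimal block size, and your candidate lists agree with the paper's subcases (A), (B), (C), (D), (E), (II)--(IV). The divergence, as you yourself flag, is the five-block candidate $P_1=\{\{0\},\{2\},\{4\},\{6\},\{1,3,5,7\}\}$ (the paper's (B1)) versus the scenario-(2) partition $P_2=\{\{0\},\{2\},\{4\},\{6\},\{1,5\},\{3,7\}\}$ (the paper's (B3)). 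The paper kills $P_1$ with $(\alpha_3)$, claiming $P_1$ and $P_2$ have the same six-class dual; you claim instead that $P_1$ passes $(\alpha_3)$. On this point your computation is the correct one: for $P_1$ the dual relation amounts to $j_1\equiv j_2 \pmod 4$ together with $f(j_1)=f(j_2)$, where $f(j)=\sum_{k\ \mathrm{odd}}\zeta^{kj}=\zeta^{j}(1+\zeta^{2j}+\zeta^{4j}+\zeta^{6j})$ vanishes unless $j\equiv 0\pmod 4$, so the dual of $P_1$ is $\{\{0\},\{4\},\{2,6\},\{1,5\},\{3,7\}\}$, five classes, matching $q=5$. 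So $(\alpha_3)$ cannot eliminate $P_1$, and the entire weight of scenario (2) falls on your substitute argument.

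That substitute argument is where the genuine gap lies: you defer it ("a direct computation of the partition of $\psi(z^4)$ shows\dots never the spurious five-block partition"), and carrying it out yields the opposite of what you assert. For $\phi=\phi_a^2(z^4)=(\phi_a(z^4))^2$ with $a\neq 0$, the relation $\phi(w)=\phi(z)$ splits into $\phi_a(w^4)=\phi_a(z^4)$, i.e.\ $w=i^kz$ (four entire local inverses, the even labels, hence singletons), and $\phi_a(w^4)=-\phi_a(z^4)$, i.e.\ $w^4=M(z^4)$ with $M=\phi_a\circ(-\phi_a)$ a M$\ddot{o}$bius involution of $\mathbb{D}$ having a simple zero at $2a/(1+|\,a|^2)\in\mathbb{D}$. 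The four odd local inverses are the branches of $(M(z^4))^{1/4}$, and the monodromy around each of the four simple zeros of $M(z^4)$ in $\mathbb{D}$ permutes these branches in a $4$-cycle; hence $\{1,3,5,7\}$ is a single equivalence class. (The identical computation in degree $4$ gives the single class $\{1,3\}$ for $\phi_a^2(z^2)$, in agreement with case (2) of the order-$4$ classification you invoke, so this is not a labeling artifact.) Thus $\phi_a^2(z^4)$ realizes $P_1$, not $P_2$: the deferred computation cannot "confirm that its partition is the asserted one", your exclusion of $P_1$ collapses, and no completion of your argument can prove scenario (2) as stated -- what your factorization strategy actually establishes is the statement with $P_1$ in place of $P_2$. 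A smaller overreach of the same kind: you claim to "produce the explicit $\phi$'s realizing scenarios (2)--(5), including all four variants of scenario (4)", but the theorem only requires the two implications in each "equivalently", and the paper itself states it cannot exhibit examples for each partition in case (4); deciding which partition a given composition $\psi\circ\varphi$ induces is exactly the branch-point analysis you postponed, and it is not a formal consequence of matching induced partitions.
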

A similar approach would work for Blaschke products of arbitrary order.  However, it seems difficult to decide whether a partition satisfying conditions $(\alpha_0)$,   $(\alpha_1)$,
 $(\alpha_2)$ and  $(\alpha_3)$ arises from a finite Blaschke product. For example, we cannot exhibit examples for each partition in Case $(4)$ in Theorem 4.1, although it is likely that they exist. We now prove Theorem 4.1 in what follows.

{\bf{Proof of Theorem 4.1.}}
By condition $(\alpha_0)$, $\{0\}$ is a singleton in the partition for $\phi$. Without loss of generality, suppose that $G_1=\{0\}$. We list all possibilities by
the minimal number $s=\min\{\sharp G_2,\cdots,\sharp G_q\}, $ where $\sharp G_k$ is the number of elements in $G_k$. Clearly $s\neq 4,5,6.$
\vskip2mm {\bf (I) Case   $s=1.$}  We suppose without loss of generality that $G_2$   is also a singleton .

{\bf Subcase (A):} suppose $G_2$ consists of one of the primitive element   $\{1,3,5,7\}$ in $\mathbb{Z}_8$. Since $\mathbb{Z}_8$ is generated by any element in $\{1,3,5,7\}$,  by Conditions $(\alpha_1)$ and $(\alpha_2)$,  each $G_k$ is a singleton. That is, the partition is just $ \{\{0\},\{1\},\{2\},\{3\},\{4\},\{5\},\{6\},\{7\} \}$.
By \cite[Lemma 8.1 and Lemma 8.3]{DSZ}, one sees that this  is equivalent  to   $\phi\sim z^8$.
\vskip2mm
{\bf Subcase (B):} suppose it is not subcase (A) and $G_2$ consists of $2$ or $6$. By Condition $(\alpha_1)$, the partition contains the singletons $\{2\},\{4\},\{6\}$. We list all possible partitions as follows:
\begin{enumerate}
\item[(B1)]\,\,\quad $\{ \{0\}, \{2\},\{4\},\{6\},\{1,5,3,7\} \}$;
\item[(B2)]\,\,\quad $\{ \{0\}, \{2\},\{4\},\{6\},\{1,3\},\{5,7\} \}$;
\item[(B3)]\,\,\quad $\{ \{0\}, \{2\},\{4\},\{6\},\{1,5\},\{3,7\} \}$;
\item[(B4)]\,\,\quad $\{ \{0\}, \{2\},\{4\},\{6\},\{1,7\},\{3,5\} \}$.
\end{enumerate}
Case  $(B2)$ is excluded by Condition $(\alpha_1)$, since   $\{2\}+\{1,3\}=\{3,5\}$ is not a union of some $G_k$ in $(B2)$. One can get rid of $(B4)$ in a similar way. The remaining cases, $(B1)$ and  $(B3)$, satisfy $(\alpha_0)$, $(\alpha_1)$ and $(\alpha_2)$. But, by a direct computation they have the same dual partition
$\{ \{0\}, \{2\},\{4\},\{6\},\{1,5\},\{3,7\} \}$. Using Condition $(\alpha_3)$, we have that $\{ \{0\}, \{2\},\{4\},\{6\},\{1,5\},\{3,7\} \}$ is the unique choice. In this case, by Lemma 4.1, there exist a finite Blaschke product $\varphi_1$ of order $4$ and  a finite Blaschke product  $\varphi_2$ of  order $2$ such that $\phi=\varphi_2\circ \varphi_1$. Moreover, by the proof of Lemma 4.1,   local inverses for $\varphi_1$ are $\rho_0,\rho_2,\rho_4,\rho_6$  in the family of local inverses of $\phi$. By \cite[Lemma 8.1 and Lemma 8.3]{DSZ}, one sees that this condition  is equivalent  to    $\varphi\sim z^4$. This means that $\phi\sim \psi(z^4)$ for some Blaschke product $\psi$ of order $2$.   Observe  that two local inverses for $\psi$ are holomorphic on $\mathbb{D}$, since one of them, $\rho_0(z)=z$, is holomorphic.  By \cite[Lemma 8.1 and Lemma 8.3]{DSZ}, $\psi=\phi_b\circ z^2\circ \phi_a$ for some M$\ddot{o}$bius transforms $\phi_a,\phi_b$. This implies that $\phi\sim \phi_a^2(z^4)$, and $a\neq 0$, since it would   degenerate  to subcase $(A)$ if a=0.
\vskip2mm
We now consider the most complicated case in which  $G_2=\{4\}$ is the unique singleton other than $G_1$. We divide it into several different subcases looking again  at the minimal number $t=\min\{\sharp G_3,\cdots,\sharp G_q\}. $ Clearly $2\leq t\leq 5$ and $t\neq 4$. So, $t$ is $2$, $3$, or $5$.

{\bf Subcase (C):}   $G_1=\{0\},G_2=\{4\}$ and $t=5$.

The only possibility is the partition   $\{ \{0\},  \{4\},\{1,2,3,5,6,7\}\}$. By Lemma 4.1 and the observation that $\psi\sim z^2$ for each
 Blaschke product $\psi$ of  order $2$, one sees that there exists a  Blaschke product  $\varphi$ of  order $4$ such that $\phi\sim \varphi(z^2)$. We prove that $\phi$ is not reducible by contradiction. Otherwise, $\phi\sim \varphi_1\circ \varphi_2$, where $\varphi_1,\varphi_1$ are   Blaschke products of order $2$. This implies that
 $\phi \sim \varphi_1\circ B$  for    a Blaschke product  $B$ of order $4$, which leads to a contraction since by Lemma 4.1 $B^{-1}\circ B$ forms  a subgroup of order $4$ in $\phi^{-1}\circ \phi$, as desired.
 \vskip2mm
  {\bf Subcase (D):}  $G_1=\{0\},G_2=\{4\}$ and $t=3$. Then the partition consists of $G_1,G_2,G_3,G_4$ with $\sharp G_3=\sharp G_4=3$. Considering condition $(\alpha_2)$ and observing that $4$ is the unique element other than $0$ for which its inverse is itself, one sees that $G_4^{-1}=G_3$. The following partitions are all possible choices at this point:
  \begin{enumerate}
\item[(D1)]\,\,\quad $\{ \{0\}, \{4\},\{1,2,3\},\{7,6,5\} \}$;
\item[(D2)]\,\,\quad $\{ \{0\}, \{4\},\{1,2,5\},\{7,6,3\} \}$;
\item[(D3)]\,\,\quad $\{ \{0\}, \{4\},\{1,6,3\},\{7,2,5 \} \}$;
\item[(D4)]\,\,\quad $\{ \{0\}, \{4\},\{1,6,5\},\{7,2,3 \} \}$.
\end{enumerate}
The case  $(D1)$ is impossible by condition $(\alpha_1)$, since   $$\{1,2,3\}+\{7,6,5\}=\{0,7,6,1,0,7,2,1,0\}$$ is not a union of some subsets in $(D1)$. One can prove similarly that $(D2),(D3)$ and $(D4)$ don't satisfy  condition $(\alpha_1)$.
\vskip2mm
 {\bf Subcase (E):}  $G_1=\{0\},G_2=\{4\}$ and $t=2$.

 One possibility is that the partition consists of $G_1,G_2,G_3,G_4$ with $\sharp G_3=2$ and $\sharp G_4=4$. By Condition $(\alpha_2)$, we have   $G_k^{-1}=G_k$ for each $G_k$. So, the only  possibilities are:
  \begin{enumerate}
\item[(E1)]\,\,\quad $\{ \{0\}, \{4\},\{1,7\},\{2,3,5,6\} \}$;
\item[(E2)]\,\,\quad $\{ \{0\}, \{4\},\{2,6\},\{1,3,5,7\} \}$;
\item[(E3)]\,\,\quad $\{ \{0\}, \{4\},\{3,5\},\{1,2,6,7\} \}$.
\end{enumerate}
One excludes   case $(E1)$ by
$$\{4\}+\{1,7\}=\{5,3\},$$
and   case $(E3)$ by
$$\{4\}+\{3,5\}=\{7,1\}.$$
Another possibility is that $\sharp G_k=2$ for any $G_k$ in the partition other than $G_1,G_2$. There exist $C_6^2 C_4^2 C_2^2/A_3^3=15$ choices:
  \begin{enumerate}
\item[(E4)]\,\,\quad $\{ \{0\}, \{4\},\{1,2\},\{3,5\},\{6,7\} \}$; [(E5)]\,\,\quad $\{ \{0\}, \{4\},\{1,2\},\{3,6\},\{5,7\}\}$;
\item[(E6)]\,\,\quad $\{ \{0\}, \{4\},\{1,2\},\{3,7\},\{5,6\} \}$; [(E7)]\,\,\quad $\{ \{0\}, \{4\},\{1,3\},\{2,5\},\{6,7\} \}$;
\item[(E8)]\,\,\quad $\{ \{0\}, \{4\},\{1,3\},\{2,6\},\{5,7\} \}$; [(E9)]\,\,\quad $\{ \{0\}, \{4\},\{1,3\},\{2,7\},\{5,6\} \}$;
\item[(E10)]\,\,\quad $\{ \{0\}, \{4\},\{1,5\},\{2,3\},\{6,7\} \}$; [(E11)]\quad $\{ \{0\}, \{4\},\{1,5\},\{2,6\},\{3,7\} \}$;
\item[(E12)]\,\,\quad $\{ \{0\}, \{4\},\{1,5\},\{2,7\},\{5,6\}\}$; [(E13)]\quad $\{ \{0\}, \{4\},\{1,6\},\{2,3\},\{5,7\} \}$;
\item[(E14)]\,\,\quad $\{ \{0\}, \{4\},\{1,6\},\{2,5\},\{3,7\} \}$; [(E15)]\quad $\{ \{0\}, \{4\},\{1,6\},\{2,7\},\{3,5\} \}$;
\item[(E16)]\,\,\quad $\{ \{0\}, \{4\},\{1,7\},\{2,3\},\{5,6\} \}$; [(E17)]\quad $\{ \{0\}, \{4\},\{1,7\},\{2,5\},\{3,6\} \}$;
\item[(E18)]\,\,\quad $\{ \{0\}, \{4\},\{1,7\},\{2,6\},\{3,5 \} \}$.
\end{enumerate}
One excludes most of them by the following observation: if $\{a,b\}$ is included in one of the above partitions, then one of the equations $a+b=0$, $a+b=4$ and $a=4+b$ holds. Indeed,
by Condition $(\alpha_1)$, $$ \{a,b\}+\{a,b\}=\{2a,a+b,a+b,2b\}$$
is a union of some $G_k\,'s.$ If $\{a+b\}$ is a singleton, then $a+b=0$ or $a+b=4$. Otherwise, $a+b$ is including
in some $G_k$ satisfying  $\sharp G_k>1$. Noticing that each element of $G_k$ is included in  $ \{a,b\}+\{a,b\}$, one sees that $\sharp G_k\leq 3$. It's easy to verify
that $\sharp G_k\neq 3$ since we assume that the singleton $\{a+b\}$ is not   in the partition. So, $\sharp G_k=2$ and
$$G_k=\{2a,a+b\}=\{a+b,2b\}.$$
That is, $2a=2b$. This means that  $ a=4+b$. Furthermore, noticing that both $2a$ and $a+b=2a+4$ are  even in that case, one sees that $G_k=\{2,6\}$.

By this observation, all the partitions other than $(E8)$,$(E11)$ and $(E18)$ are excluded. By a direct computation, one sees that $(E8)$, $(E11)$ and $(E18)$ satisfy the other conditions, too.

Moreover, the above argument shows that $(E2)$, $(E8)$, $(E11)$  and  $(E18)$  are all the possible partitions that include the sets  $\{0\},\{4\},\{2,6\}$. By Lemma 4.1 and \cite[Theorem 2.1]{SZZ}, there exists a Blaschke product $\psi$ of order $2$ and    a Blaschke product $\varphi$ of order $4$, such that $\phi=\psi\circ \varphi$ and $\varphi$ is included in Case $2$ in  \cite[Theorem 2.1]{SZZ}. This implies that $\phi$ has the desired decomposition.
\vskip2mm

We now turn to the cases $s>1$. Firstly, by Condition $(\alpha_2)$, $4$ is not included  in any $G_k$ for which   $\sharp G_k $ is even. Otherwise, if $4\in G_k$, then $G_k^{-1}=G_k$ since $4$ is the unique element other than $0$ for which its  inverse is itself. Therefore, $$G_k=\{4,k_1,\cdots,k_i,8-k_1,\cdots,8-k_i\}$$
 for some $k_1,\cdots, k_i$. This contradicts  the fact that  $\sharp G_k$ is even. So, $4\notin G_k$ if $\sharp G_k $ is even.

Secondly,  the argument used in analyzing subcase $(E)$ is still valid. Hence, if $\{a,b\}$ is   in the partition, then $a+b=0$ or $a=4+b$. In the latter case,
$\{2,6\}$ is   in the partition. Moreover, since $\{a,b\}+\{a,b\}$ is a union  of some $G_k\,'s$ satisfying $\sharp G_k\leq 2$, and $4$ is not included in any such  $G_k$, we have that $4\neq 2a,2b,2(a+b)$. Therefore,  neither $ 2$ nor $6 $ can be included in any $G_k$ when the partition satisfies  $s>1$ and $\sharp G_k=2$. It also implies that $a+b=0$ if $\{a,b\}$ is   in the partition.

{\bf (II) Case $s=2.$}

One possibility is that the partition consists of $G_1,G_2,G_3$ satisfying   $\sharp G_2=2$ and $\sharp G_3=5$. By the above observation, such  partition is one of the following:
  \begin{enumerate}
\item[(II1)]\,\,\quad $\{ \{0\}, \{1, 7\},\{2,3,4,5,6\} \}$;
\item[(II2)]\,\,\quad $\{ \{0\}, \{3, 5\},\{1,2,4,6,7\} \}$.
\end{enumerate}
Obviously, none of them satisfies Condition $(\alpha_1)$.

Another scenario is that the partition consists of $G_1,G_2,G_3, G_4$ satisfying  $\sharp G_2=\sharp G_3=2$ and $\sharp G_4=3$. By the above argument,  $G_4=\{2,4,6\}$. So, all the possibilities are listed below:
  \begin{enumerate}
\item[(II3)]\,\,\quad $\{ \{0\},\{1,3\},\{5,7\}, \{2,4,6\}\} $;
\item[(II4)]\,\,\quad $\{ \{0\}, \{1,5\},\{3,7\},\{2,4,6\}\} $;
\item[(II5)]\,\,\quad $\{ \{0\},\{1,7\},\{3,5\}, \{2,4,6\}\} $.
\end{enumerate}
None of them satisfies Condition $(\alpha_1)$.\vskip2mm
{\bf (III) Case $s=3$.}

In this case, the partition consists of $G_1,G_2,G_3$ satisfying $\sharp G_2=3$ and $\sharp G_3=4$. By the above argument and Condition $(\alpha_2)$, one sees that $G_2^{-1}=G_2$, $G_3^{-1}=G_3$ and $4\in G_2$. So, the partition is one of the following:
  \begin{enumerate}
\item[(III1)]\,\,\quad $\{ \{0\}, \{1,4,7\},\{2,3,5,6\} \}$;
\item[(III2)]\,\,\quad $\{ \{0\}, \{2,4,6\},\{1,3,5,7\} \}$;
\item[(III3)]\,\,\quad $\{ \{0\}, \{3,4,5\},\{1,2,6,7\} \}$.
\end{enumerate}
Both $\mathrm{(III1)}$ and $\mathrm{(III2)}$ are excluded  by Condition $(\alpha_1)$, since
$\{1,4,7\}+\{1,4,7\}$ and $\{3,4,5\}+\{3,4,5\}$ are not unions of some subsets in the partitions, respectively. For the finial possibility  $\{ \{0\}, \{2,4,6\},\{1,3,5,7\} \}$, using an argument similar  to the  above, one sees that it is equivalent  to the condition that    $\phi\sim  \psi\circ  \varphi  $, where $\psi$ is a  Blaschke product of order $2$ and $\varphi$ is a Blaschke product of order $4$, and $\varphi$ is included in case $3$ in \cite[Theorem 2.1]{SZZ}.
\vskip2mm
{\bf (IV) Case $s=7$.}

The only choice is $\{ \{0\}, \{1,2,3,4,5,6,7\}\}$. By Lemma 4.1, $\phi$ is not reducible in this case. $\quad\quad\quad\quad\quad\quad\quad\quad\quad\quad\quad\quad\quad\quad\quad\quad\quad\quad\quad\quad\quad\quad\quad\quad\quad\quad\quad\quad\quad\quad\quad\quad \quad\quad\Box$
\vskip3mm
We conclude with the following corollary which follows after one summarizes  all the  possibilities listed above.
\begin{cor}
Let $\phi$ be a finite Blaschke product of order 8. Then $M_\phi$ has exactly $2$ nontrivial minimal reducing subspaces if and only if $\phi$ is not reducible.
\end{cor}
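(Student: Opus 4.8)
The plan is to reduce the statement to a count of the number $q$ of components of the partition and then read off the answer from the classification already in hand. By Corollary 2.4 (equivalently Theorem 3.1), the operator $M_\phi$ has exactly $q$ nontrivial minimal reducing subspaces, where $q$ is the number of classes $G_k$ in the partition of the local inverses of $\phi^{-1}\circ\phi$. Thus the assertion ``$M_\phi$ has exactly two nontrivial minimal reducing subspaces'' is literally the condition $q=2$, and the corollary becomes the purely combinatorial claim that $q=2$ if and only if $\phi$ is not reducible.

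First I would dispose of the direction ``$q=2 \Rightarrow \phi$ irreducible''. By condition $(\alpha_0)$ the class $\{0\}$ is always a singleton, so $q=2$ forces the partition to be $\{\{0\},\{1,2,3,4,5,6,7\}\}$. The only nontrivial proper subgroups of $\mathbb{Z}_8$ are $\{0,4\}$ and $\{0,2,4,6\}$, whereas the only sets obtainable as unions of these two classes are $\{0\}$, $\{1,\dots,7\}$ and all of $\mathbb{Z}_8$. Since none of these is a nontrivial proper subgroup, Lemma 4.1 shows that $\phi$ cannot be reducible.

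For the converse ``$\phi$ irreducible $\Rightarrow q=2$'' I would invoke the classification in Theorem 4.1. In scenarios $(1)$ through $(5)$ the symbol $\phi$ carries an explicit factorization $\phi \sim \varphi_1\circ\varphi_2$ into nontrivial Blaschke products and is therefore reducible, and in each of these scenarios the displayed partition has $q\ge 3$ components. Only in scenario $(6)$, with partition $\{\{0\},\{1,\dots,7\}\}$, is $\phi$ irreducible, and there $q=2$. Consequently an irreducible $\phi$ of order eight must fall into scenario $(6)$ and hence has exactly two nontrivial minimal reducing subspaces. Combining the two directions yields the corollary.

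The genuine difficulty has already been absorbed into Theorem 4.1: the hard part is the enumeration of all admissible partitions of $\mathbb{Z}_8$ and the exclusion of the inadmissible ones by means of conditions $(\alpha_0)$--$(\alpha_3)$. Granting that classification, the only remaining subtlety is bookkeeping, namely correctly identifying the count of minimal reducing subspaces with $q$ and confirming that the reducible scenarios are precisely those with $q\ge 3$. No further analytic input is required.
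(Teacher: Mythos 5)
Your proposal is correct and takes essentially the same route as the paper: the paper's proof is precisely to combine Corollary 2.4 (the number of minimal reducing subspaces equals $q$) with the enumeration in Theorem 4.1, observing that the partition has $q=2$ exactly in scenario $(6)$, whose defining equivalence is irreducibility, while scenarios $(1)$--$(5)$ are all reducible with $q\geq 3$. Your separate group-theoretic argument for the direction ``$q=2 \Rightarrow \phi$ irreducible'' via Lemma 4.1 merely re-derives what case $(6)$ of Theorem 4.1 already asserts, so it is a harmless elaboration rather than a different method.
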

It is natural to ask if this result   extends to the general case. One can obtain  a similar result for order $6$ by the above arithmetic way. But, the calculation
for order $5$ or $7$ suggests that some  counterexample may exist, although we can't exhibit it. A possible    guess may be that the result holds whenever the order of $\phi$ is not prime.
 \bibliographystyle{plain}

\end{document}